\documentclass[journal,onecolumn]{IEEEtran}

\usepackage{amsmath,amssymb,amsfonts,amsthm,mathtools}
\usepackage{cite}
\usepackage{booktabs}
\usepackage{array}
\usepackage{enumitem}
\usepackage{url}
\usepackage[hidelinks]{hyperref}
\allowdisplaybreaks

\newtheorem{theorem}{Theorem}
\newtheorem{lemma}{Lemma}
\newtheorem{corollary}{Corollary}
\newtheorem{proposition}{Proposition}
\newtheorem{definition}{Definition}

\newcommand{\Z}{\mathbb Z}

\newcommand{\Q}{\mathcal Q}
\newcommand{\Qi}{\mathcal Q^{\rm irr}}
\newcommand{\eps}{\varepsilon}

\begin{document}

\title{A Local Valuation Criterion for Quadratic-Permutation Interleaved Zadoff--Chu Sequences}

\author{Yutong~Zhang,~\IEEEmembership{Member,~IEEE,}
        and~Yaoran~Yang,~\IEEEmembership{Member,~IEEE}%
\thanks{Manuscript received Month Day, 2026; revised Month Day, 2026.}
\thanks{Yutong Zhang and Yaoran Yang are with the School of Mathematics,
Sichuan University, Chengdu 610065, China
(e-mail: yutongzhang@stu.scu.edu.cn; yangyaoran@stu.scu.edu.cn).}
\thanks{Corresponding author: Yutong Zhang.}}

\markboth{IEEE Transactions on Information Theory,~Vol.~XX, No.~XX, Month~2026}%
{Zhang and Yang: A Local Valuation Criterion for QPP-Interleaved Zadoff--Chu Sequences}

\maketitle

\begin{abstract}
Berggren and Popovi\'c introduced quadratic-permutation-polynomial interleaved Zadoff--Chu sequences and, from exhaustive data, conjectured that, after excluding the affine case, all normalized QPP-interleaved Zadoff--Chu sequences are inequivalent to ordinary Zadoff--Chu sequences precisely for prime-power lengths $N=p^n$ with $p>3$ and $n>1$.  We give an exact local arithmetic criterion.  For a normalized QPP $\pi_{a,b}(k)=ak^2+bk\pmod N$, the interleaved sequence is equivalent, under the standard five CAZAC-preserving operations, to a Zadoff--Chu sequence if and only if, for every prime power $p^\alpha\Vert N$, the valuation of $a$ satisfies
\[
\nu_p(a)\ge
\begin{cases}
0, & p=2,\ \alpha=1,\\
\alpha-1, & p=2,\ \alpha\ge2,\\
\alpha-1, & p=3,\\
\alpha, & p>3.
\end{cases}
\]
The proof is based on a third finite-difference invariant of the lifted Zadoff--Chu phase, namely
\[
\Delta^3\bigl((ak^2+bk+\varepsilon_N+2q)(ak^2+bk)\bigr)
      =12a(2ak+3a+b).
\]
As a consequence, the conjectured prime-power boundary is not correct: the exact non-vacuous condition for all nonzero normalized QPPs to be inequivalent to Zadoff--Chu sequences is that $N$ is odd, $9\nmid N$, and $p^2\mid N$ for at least one prime $p\ge5$.  In particular, $N=75=3\cdot5^2$ is the smallest non-prime-power counterexample to the conjectured ``only if'' direction.  A second corollary records the corresponding statement for irreducible QPPs.
\end{abstract}

\begin{IEEEkeywords}
Zadoff--Chu sequences, CAZAC sequences, permutation polynomials, quadratic permutation polynomials, finite differences, integer residue rings, valuation theory.
\end{IEEEkeywords}

\section{Introduction}

Let $N\ge2$ and let
\[
        \zeta_N=\exp(-\pi\sqrt{-1}/N),\qquad
        W_N=\zeta_N^2=\exp(-2\pi\sqrt{-1}/N).
\]
For $u\in\Z$ with $(u,N)=1$, $q\in\Z$, and
\[
        \eps_N=N\bmod 2\in\{0,1\},
\]
the length-$N$ Zadoff--Chu phase convention used here is
\begin{equation}
        z_{u,q,N}(t)
        =\zeta_N^{u(t+\eps_N+2q)t},
        \qquad 0\le t<N .                         \label{eq:zc}
\end{equation}
The parameter $q$ contributes only a linear-frequency term in the ordinary Zadoff--Chu variable $t$, but after permutation interleaving it contributes a quadratic term in the external index $k$.  It is therefore retained in the notation until the finite-difference cancellation has been made explicit.
The present work sits at the intersection of CAZAC sequence theory, permutation-polynomial interleaving, and local arithmetic over residue rings.  Its immediate starting point is the QPP-interleaved Zadoff--Chu construction of Berggren and Popovi\'c, who formulated the prime-power inequivalence prediction studied here \cite{BerggrenPopovic2024}; a subsequent analysis by Yuan, Li, and Zeng further investigated the same permutation-interleaved Zadoff--Chu family and established the prime-power sufficiency part of that prediction \cite{YuanLiZeng2026}.  The underlying constant-amplitude zero-autocorrelation sequence model goes back to Chu's polyphase sequences \cite{Chu1972} and to the earlier phase-shift pulse codes of Frank, Zadoff, and Heimiller \cite{FrankZadoff1962}, while Popovi\'c's generalized chirp-like sequences provide a broader CAZAC framework in which Zadoff--Chu sequences appear as a canonical quadratic-phase class \cite{Popovic1992}.  On the interleaving side, Gong's theory of $q$-ary interleaved sequences supplies the general sequence-theoretic viewpoint \cite{Gong1995}, and the use of permutation polynomials over integer rings as structured interleavers was developed in the turbo-code literature by Sun and Takeshita \cite{SunTakeshita2005}.  Takeshita's work on maximum contention-free interleavers \cite{Takeshita2006}, Rivest's characterization of permutation polynomials modulo powers of two \cite{Rivest2001}, the quadratic-inverse analysis of Ryu and Takeshita \cite{RyuTakeshita2006}, and the inverse-degree results of Lahtonen, Ryu, and Suvitie \cite{LahtonenRyuSuvitie2012} together explain why quadratic permutation polynomials over $\Z_N$ are the natural finite-ring objects in the present problem.  The proof techniques used below are also standard in arithmetic finite-algebra contexts: finite-field and residue-ring facts are used in the form recorded by Lidl and Niederreiter \cite{LidlNiederreiter}, elementary valuation and divisibility arguments follow the classical number-theoretic language of Hardy and Wright \cite{HardyWright}, the interpretation of CAZAC phase sums is related to vanishing sums of roots of unity as studied by Lam and Leung \cite{LamLeung2000}, and the broader motivation belongs to the modern theory of low-correlation sequences surveyed by Katz \cite{Katz2018}.
The object studied in \cite{BerggrenPopovic2024} is the interleaved sequence
\begin{equation}
        y_{u,q}^{a,b}(k)
        =z_{u,q,N}(\pi_{a,b}(k)),
        \qquad
        \pi_{a,b}(k)=ak^2+bk\pmod N,               \label{eq:interleaved}
\end{equation}
where $\pi_{a,b}$ is a quadratic permutation polynomial (QPP) over $\Z_N$.  The constant coefficient is normalized to zero, as in the uniqueness computation and equivalence equation in \cite{BerggrenPopovic2024}.  This normalization is also the one recalled in the later work \cite{YuanLiZeng2026}, where the same conjecture is quoted and its sufficiency for prime powers is proved.

The five standard CAZAC-preserving operations considered in \cite{BerggrenPopovic2024} are rotation, translation, decimation, multiplication by a linear-frequency-modulation term, and conjugation.  Thus the relevant equivalence relation is not literal equality of two displayed formulas, but equality after the transformations
\begin{align}
        s(k)&\longmapsto c s(k), & |c|&=1,                                              \label{eq:op-rot}\\
        s(k)&\longmapsto s(k+d), & d&\in\Z_N,                                           \label{eq:op-trans}\\
        s(k)&\longmapsto s(rk), & r&\in\Z_N^\times,                                    \label{eq:op-dec}\\
        s(k)&\longmapsto W_N^{\lambda k}s(k), & \lambda&\in\Z_N,                       \label{eq:op-lfm}\\
        s(k)&\longmapsto \overline{s(k)}.                                               \label{eq:op-conj}
\end{align}
Equivalently, $y_{u,q}^{a,b}$ is Zadoff--Chu-equivalent if, for some
\[
        \sigma\in\{\pm1\},\quad r\in\Z_N^\times,\quad d,\lambda\in\Z_N,
        \quad u'\in\Z_N^\times,
\]
and some unimodular constant $c$, one has
\begin{equation}
        y_{u,q}^{a,b}(k)
        =c\,W_N^{\lambda k}
        \zeta_N^{\sigma u' (rk+d+\eps_N+2q')(rk+d)}             \label{eq:equiv-expanded}
\end{equation}
for all $k\in\Z_N$ and for some $q'\in\Z_N$.

The computational observation in \cite{BerggrenPopovic2024} led to the following prime-power prediction: after excluding the affine case, all QPP-interleaved Zadoff--Chu sequences should be inequivalent to ordinary Zadoff--Chu sequences only when
\begin{equation}
        N=p^n,
        \qquad p>3\text{ prime},
        \qquad n>1.                                  \label{eq:bp-boundary}
\end{equation}
The sufficiency of \eqref{eq:bp-boundary} was later confirmed in \cite{YuanLiZeng2026}.  The present paper proves that \eqref{eq:bp-boundary} is not the correct boundary.  The prime-power assumption is unnecessary; what matters is a local valuation obstruction at a repeated prime $p>3$, together with the absence of the exceptional local collapses at $2$ and $3$.

The main result is the following exact criterion.  For an integer $x$, $\nu_p(x)$ denotes the $p$-adic valuation, with $\nu_p(0)=\infty$, and $p^\alpha\Vert N$ means $p^\alpha\mid N$ but $p^{\alpha+1}\nmid N$.  When $a$ is a residue class modulo $N$ and $0\le t\le\alpha$, the notation $\nu_p(a)\ge t$ means that the image of $a$ in $\Z_{p^\alpha}$ is divisible by $p^t$; in particular, $\nu_p(a)\ge\alpha$ means $a\equiv0\pmod{p^\alpha}$.

\begin{theorem}[local valuation criterion]                                      \label{thm:local}
Let $N\ge3$, let $\pi_{a,b}(k)=ak^2+bk$ be a normalized QPP over $\Z_N$, let $(u,N)=1$, and let $q\in\Z$.  Then the sequence $y_{u,q}^{a,b}$ in \eqref{eq:interleaved} is equivalent to a Zadoff--Chu sequence under \eqref{eq:op-rot}--\eqref{eq:op-conj} if and only if, for each prime power $p^\alpha\Vert N$,
\begin{equation}
        \nu_p(a)\ge \eta(p,\alpha),                  \label{eq:eta-cond}
\end{equation}
where
\begin{equation}
\eta(p,\alpha)=
\begin{cases}
0, & p=2,\ \alpha=1,\\
\alpha-1, & p=2,\ \alpha\ge2,\\
\alpha-1, & p=3,\\
\alpha, & p>3.
\end{cases}                                             \label{eq:eta}
\end{equation}
\end{theorem}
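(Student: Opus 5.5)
The plan is to work entirely with phase exponents modulo $2N$. Since $z_{u,q,N}$ is $N$-periodic in $t$ (this is what the convention $\eps_N=N\bmod 2$ ensures), we may use the integer polynomial $\pi(k)=ak^2+bk$ without reducing it modulo $N$. Then $y_{u,q}^{a,b}(k)=\zeta_N^{F(k)}$ with
\[
F(k)=u\,(ak^2+bk+\eps_N+2q)(ak^2+bk)\pmod{2N},
\]
which is a quartic in $k$. The right-hand side of \eqref{eq:equiv-expanded} equals $\zeta_N^{G(k)}$ with
\[
G(k)=\sigma u'(rk+d+\eps_N+2q')(rk+d)+2\lambda k+\gamma,
\]
where $c=\zeta_N^{\gamma}$ (a general unimodular $c$ can be absorbed because equality at one $k$ already forces $c$ to be a power of $\zeta_N$). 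So equivalence means $F\equiv G\pmod{2N}$ on $\Z$ for suitable parameters.

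\textbf{Necessity.} The operator $\Delta^3$ annihilates every quadratic polynomial, so $\Delta^3G\equiv0\pmod{2N}$. The abstract's identity then gives the necessary condition
\[
12ua(2ak+3a+b)\equiv0\pmod{2N}\quad\text{for all }k,
\]
that is, $24a^2\equiv0$ and $12a(3a+b)\equiv0\pmod{2N}$. I would localize this by CRT at each $p^\alpha\Vert N$, using the standard QPP criterion (Sun--Takeshita) for each local factor:
\begin{itemize}[leftmargin=*]
\item For odd $p$, we have $p\mid a$ and $p\nmid b$, so $3a+b$ is a $p$-unit. For $p>3$ this forces $p^\alpha\mid a$. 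For $p=3$ it forces $3^\alpha\mid 3a$, i.e.\ $\nu_3(a)\ge\alpha-1$. The condition $24a^2\equiv0$ is then automatic.
\item For $p=2$ with $\alpha\ge2$, we have $a$ even and $b$ odd, so $3a+b$ is odd. The condition $2^{\alpha+1}\mid 12a$ gives $\nu_2(a)\ge\alpha-1$, and $2^{\alpha+1}\mid24a^2$ then holds.
\item For $p=2$ with $\alpha=1$, the condition reads $4\mid 12a(3a+b)$, which always holds.
\end{itemize}
This shows that \eqref{eq:eta-cond} is equivalent to $\Delta^3F\equiv0\pmod{2N}$, and hence gives the ``only if'' direction.

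\textbf{Sufficiency.} Assume $\Delta^3F\equiv0\pmod{2N}$. Newton's forward-difference formula then gives
\[
F(k)\equiv F(0)+\Delta F(0)\,k+\Delta^2F(0)\binom{k}{2}\pmod{2N}\quad\text{for all }k\ge0,
\]
and this extends to all $k$ by $N$-periodicity. I must match this with $G$. The second difference of $G$ is the constant $2\sigma u'r^2$, so the key requirement is $\Delta^2F(0)\equiv 2w\pmod{2N}$ for some unit $w$ modulo $N$. Given that, take $\sigma=1$, $r=1$, $u'=w$. Next, the linear coefficient of $G$ in the binomial basis is $w(1+\eps_N+2q'+2d)+2\lambda$, and it must be matched to $\Delta F(0)$ modulo $2N$. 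This is solvable for $\lambda$ provided the parities agree, which is a condition on $\Delta F(0)-w(1+\eps_N)$ modulo $2$ that only matters when $N$ is even. Finally, $\gamma$ absorbs the constant term.

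Computing directly, $\Delta^2F(0)=2u\bigl(a(\eps_N+2q)+b^2+\dots\bigr)$ plus terms in $a$; the precise expression follows from $F(2)-2F(1)+F(0)$. Locally at odd $p$, $p\mid a$ and $p\nmid b$ make the bracket a unit, so $w$ is a unit there. At $p=2$ I would verify this separately. I also need the parity match for $\Delta F(0)=F(1)=u(a+b+\eps_N+2q)(a+b)$. When $N$ is even, $a+b$ is odd for any QPP, so $F(1)\equiv u(1+\eps_N)\cdot(\text{odd})$, which should match the parity required above.

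I expect the main obstacle to be exactly this last step: showing that $\Delta^2F(0)/2$ is a unit modulo $N$, and that the parities agree, in the $2$-adic cases. These are also where the Newton interpolation modulo $2N$ needs care, since the binomial basis is integral but the halving is delicate. If it becomes awkward, my first fallback is to allow a decimation $r\neq1$ and $\sigma=-1$, which gives extra freedom to choose $u'r^2$ and fix the residual discrepancy.
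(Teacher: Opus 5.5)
Your proposal is correct. The necessity half is the paper's argument (Proposition~\ref{prop:nec}); the sufficiency half takes a genuinely different and more uniform route.

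One small point in necessity: when $N$ is odd, $u$ may be even and is then not a unit modulo $2N$. This is harmless, because the factor $12$ already makes the mod-$2$ component vanish, and modulo $N$ you may cancel $u$.

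The paper proves sufficiency by building local quadratic representatives and gluing them. It sets $a\equiv0$ on $p>3$ components and collapses the cubic and quartic terms via $X^3\equiv X\pmod 3$, $X^3\equiv X\pmod 2$ and $X^4\equiv X^2\pmod 4$ on the exceptional components. It treats $2\Vert N$ separately modulo $4$, then glues by the Chinese remainder theorem before invoking Lemma~\ref{lem:quad-equiv}. You instead note that \eqref{eq:eta-cond} is equivalent to the single global congruence $\Delta^3F\equiv0\pmod{2N}$. Newton interpolation modulo $2N$ then produces the quadratic at once.

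The step you flagged as the main obstacle does close, with no need for the decimation or conjugation fallback. Since $F(0)=0$,
\[
\Delta^2F(0)=F(2)-2F(1)=2w,\qquad w=u\bigl(b^2+6ab+7a^2+(\eps_N+2q)a\bigr).
\]
Here $w\equiv ub^2\pmod p$ for every odd $p\mid N$. When $4\mid N$, $w$ is odd because $u,b$ are odd and $a$ is even. When $2\Vert N$, $w\equiv a+b\equiv1\pmod 2$. So $w$ is a unit modulo $N$.

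The parity requirement $F(1)\equiv w(1+\eps_N)\pmod 2$ also holds. For odd $N$, $F(1)=u(a+b)(a+b+1+2q)$ is even, because its two last factors have opposite parity, and this matches $2w$. For even $N$, $F(1)\equiv(a+b)^2\equiv1\equiv w\pmod 2$. Hence $\sigma=r=1$, $d=q'=0$, $u'=w$ and a suitable $\lambda$ suffice.

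What your route buys is a sharper statement. For normalized QPPs the unit and parity conditions hold unconditionally, so Zadoff--Chu equivalence is equivalent to $\Delta^3F\equiv0\pmod{2N}$. The third difference is thus a complete invariant, not only an obstruction; the paper's \eqref{eq:general-method} records only necessity. What the paper's route buys is explicit local normal forms, the reduction rules \eqref{eq:rule3}--\eqref{eq:rule2}. These show concretely how the cubic and quartic terms collapse into lower-degree terms at $2$ and $3$.
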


The theorem immediately gives the corrected version of the conjectured boundary for nonzero normalized QPPs.

\begin{corollary}[corrected nonzero-QPP boundary]                         \label{cor:allqpp}
Let
\begin{equation}
        \Q_N=\{\pi_{a,b}(X)=aX^2+bX\in\Z_N[X]:
        \pi_{a,b}\text{ permutes }\Z_N,
        \ a\not\equiv0\pmod N\}.                       \label{eq:QN}
\end{equation}
Then $\Q_N\ne\varnothing$ and every sequence $z_{u,q,N}(\pi(k))$, with $\pi\in\Q_N$, $(u,N)=1$, and $q\in\Z$, is inequivalent to every Zadoff--Chu sequence if and only if
\begin{equation}
        N\text{ is odd},
        \qquad 9\nmid N,
        \qquad \exists p\ge5\text{ prime with }p^2\mid N .             \label{eq:correct-boundary}
\end{equation}
In particular, $N=75=3\cdot5^2$ satisfies \eqref{eq:correct-boundary} but is not a prime power; hence it is a counterexample to the ``only if'' direction of \eqref{eq:bp-boundary}.
\end{corollary}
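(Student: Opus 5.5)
The plan is to derive Corollary~\ref{cor:allqpp} from Theorem~\ref{thm:local}, combined with the classical local criterion for quadratic permutation polynomials over $\Z_N$ (Sun--Takeshita \cite{SunTakeshita2005}; see also Rivest \cite{Rivest2001}). By the Chinese remainder theorem, $aX^2+bX$ permutes $\Z_N$ if and only if it permutes $\Z_{p^\alpha}$ for every $p^\alpha\Vert N$. The local conditions are as follows:
\begin{itemize}
\item for $p=2$, $\alpha=1$: $a+b$ is odd;
\item for $p=2$, $\alpha\ge2$: $a$ is even and $b$ is odd;
\item for $p$ odd: $p\mid a$ and $p\nmid b$.
\end{itemize}
I will first record this criterion. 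Everything else is bookkeeping that compares these permutation constraints with the thresholds $\eta(p,\alpha)$ in \eqref{eq:eta}. I expect the only delicate point to be stating this permutation criterion correctly at $p=2$. The rest is a CRT construction.

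\emph{Necessity.} Assume that $\Q_N\ne\varnothing$ and that every interleaved sequence built from $\Q_N$ is inequivalent to every Zadoff--Chu sequence. I would argue by contradiction, using CRT to build a QPP $\pi_{a,b}$ with $a\not\equiv0\pmod N$ that satisfies \eqref{eq:eta-cond} at every prime. The local choices are:
\begin{itemize}
\item at $2^1$: $a\equiv1$, $b\equiv0$;
\item at $2^\alpha$ with $\alpha\ge2$: $a\equiv2^{\alpha-1}$, $b\equiv1$;
\item at $3^\alpha$ with $\alpha\ge2$: $a\equiv3^{\alpha-1}$, $b\equiv1$;
\item at every other $p^\alpha\Vert N$: $a\equiv0$, $b\equiv1$.
\end{itemize}
Each local pair satisfies both the permutation criterion and $\nu_p(a)\ge\eta(p,\alpha)$. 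Moreover, $a\not\equiv0\pmod N$ as soon as $N$ is even or $9\mid N$. Theorem~\ref{thm:local} would then make this sequence Zadoff--Chu-equivalent, which is a contradiction. Hence $N$ is odd and $9\nmid N$.

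If, in addition, no prime $p\ge5$ had $p^2\mid N$, then $N$ would be odd and squarefree. The permutation criterion would then force $p\mid a$ for every $p\mid N$, so $a\equiv0\pmod N$ and $\Q_N=\varnothing$, again a contradiction. This gives \eqref{eq:correct-boundary}.

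\emph{Sufficiency.} Assume \eqref{eq:correct-boundary}, and fix $p\ge5$ with $p^2\mid N$. To show $\Q_N\ne\varnothing$, I take $a=N/p$ and $b=1$. Then $a$ is divisible by every prime dividing $N$, including $p$ itself since $\alpha\ge2$, while $a\not\equiv0\pmod N$. So $\pi_{a,1}\in\Q_N$.

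Now let $\pi_{a,b}\in\Q_N$ be arbitrary. Since $a\not\equiv0\pmod N$, there is some $p_0^{\alpha_0}\Vert N$ with $a\not\equiv0\pmod{p_0^{\alpha_0}}$. Because $N$ is odd, $p_0\ne2$. Also $p_0\ne3$: since $9\nmid N$, the only possibility would be $\alpha_0=1$, and then the permutation criterion forces $3\mid a$. Hence $p_0\ge5$. For such primes $\eta(p_0,\alpha_0)=\alpha_0$, so \eqref{eq:eta-cond} fails at $p_0$. Theorem~\ref{thm:local} then shows that $z_{u,q,N}(\pi(k))$ is inequivalent to every Zadoff--Chu sequence, for all $(u,N)=1$ and all $q\in\Z$.

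Finally, $N=75=3\cdot5^2$ is odd, satisfies $9\nmid75$, and has $5^2\mid75$. So it satisfies \eqref{eq:correct-boundary} but is not a prime power, which is the claimed counterexample to \eqref{eq:bp-boundary}.
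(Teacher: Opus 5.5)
Your proposal is correct and follows essentially the same route as the paper: it combines Lemma~\ref{lem:qpp} with Theorem~\ref{thm:local}, uses CRT witnesses to handle $N$ even or $9\mid N$, and observes that a nonzero $a$ must survive at some repeated prime $p\ge5$. The differences are cosmetic: you merge the paper's two witnesses ($a=N/2$ and $a=N/3$) into a single CRT construction, and you explicitly exhibit $a=N/p$, $b=1$ to show $\Q_N\ne\varnothing$, a step the paper's proof of this corollary leaves implicit.
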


There is a second natural interpretation in the QPP-interleaver literature: one may discard QPPs whose quadratic term induces an affine permutation.  A standard irreducibility test is
\begin{equation}
        \gcd(N,2a)<N.                                      \label{eq:irr-test}
\end{equation}
Theorem \ref{thm:local} also gives the corresponding exact statement.

\begin{corollary}[irreducible-QPP version]                              \label{cor:irr}
Let
\begin{equation}
        \Qi_N=\{\pi_{a,b}\in\Q_N:\gcd(N,2a)<N\}.             \label{eq:Qirr}
\end{equation}
Then $\Qi_N\ne\varnothing$ and every sequence $z_{u,q,N}(\pi(k))$, with $\pi\in\Qi_N$, is Zadoff--Chu-inequivalent if and only if
\begin{equation}
        9\nmid N
        \quad\text{and}\quad
        \bigl(8\mid N\text{ or }p^2\mid N\text{ for some odd prime }p\bigr).       \label{eq:irr-boundary}
\end{equation}
\end{corollary}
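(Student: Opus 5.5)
The plan is to deduce the corollary from Theorem~\ref{thm:local} by a purely local (CRT) bookkeeping argument. The key point is that the criterion \eqref{eq:eta-cond} depends only on $a$, not on $u$, $q$ or $b$. So the claim ``every sequence built from $\Qi_N$ is Zadoff--Chu-inequivalent'' is equivalent to the following: no $\pi_{a,b}\in\Qi_N$ satisfies $\nu_p(a)\ge\eta(p,\alpha)$ at every $p^\alpha\Vert N$. Both membership in $\Qi_N$ and the $\eta$-condition decompose prime by prime, so I will describe, for each $p^\alpha\Vert N$, which local valuations of $a$ are admissible.

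\emph{Step 1 (local permutation and irreducibility conditions).} I will use the standard QPP criterion (Sun--Takeshita). For an odd $p$ with $p^\alpha\Vert N$, one needs $p\mid a$ and $p\nmid b$. For $2\Vert N$, one needs $a+b$ odd. For $4\mid N$, one needs $a$ even and $b$ odd. Since $\gcd(N,2a)<N$ holds exactly when some prime has $\nu_p(2a)<\alpha$, irreducibility is a condition at a single witness prime.

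\emph{Step 2 (nonemptiness).} I will show $\Qi_N\ne\varnothing$ iff $8\mid N$ or $p^2\mid N$ for some odd $p$.
\begin{itemize}
\item For odd $p$, a witness needs $1\le\nu_p(a)\le\alpha-1$, which is possible iff $\alpha\ge2$.
\item For $p=2$, a witness needs $1\le\nu_2(a)\le\alpha-2$, which is possible iff $\alpha\ge3$.
\end{itemize}
By CRT, the local residues can be glued with suitable $b$ and $a\not\equiv0$ to produce an element of $\Qi_N$. For $N=2$, which the theorem excludes, the set is empty, and this is consistent with the condition failing.

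\emph{Step 3 (existence of a ZC-equivalent irreducible QPP).} I will show such a QPP exists iff $9\mid N$, by combining the witness condition with $\nu_p(a)\ge\eta(p,\alpha)$ at the witness prime.
\begin{itemize}
\item For $p>3$: $\eta=\alpha$ forces $\nu_p(2a)\ge\alpha$, so there is no witness.
\item For $p=2$: $\eta\ge\alpha-1$ gives $\nu_2(2a)\ge\alpha$, so there is no witness. When $\alpha=1$, $\nu_2(2a)\ge1$ automatically.
\item For $p=3$: $\eta=\alpha-1$ combined with $3\mid a$ is compatible with $\nu_3(2a)=\alpha-1<\alpha$ exactly when $\alpha\ge2$, that is, when $9\mid N$.
\end{itemize}
At all other primes I will take $a\equiv0\pmod{p^\alpha}$ and $b\equiv1$, which satisfies both the QPP and $\eta$ conditions. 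I will then glue by CRT, taking $a\equiv 3^{\alpha-1}$ and $b\equiv1$ modulo $3^\alpha$. Theorem~\ref{thm:local} (valid since $N\ge9$) then gives a ZC-equivalent sequence. Conversely, if $9\nmid N$, every $\pi\in\Qi_N$ has its witness prime violating \eqref{eq:eta-cond}, so every such sequence is inequivalent for all $u,q$.

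Combining Steps~2 and~3 yields \eqref{eq:irr-boundary}. The main obstacle is Step~3's case analysis at $p=2$. There the QPP parity constraints and the shift between $\nu_2(a)$ and $\nu_2(2a)$ interact, so I must check carefully that $\eta(2,\alpha)=\alpha-1$ really kills every irreducibility witness, and that no mixed choice with $2\Vert N$ slips through.
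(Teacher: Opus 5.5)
Your proposal is correct and follows essentially the same route as the paper. You do a prime-by-prime analysis of when a local component can witness $\gcd(N,2a)<N$ (odd $p$ with $\alpha\ge2$, or $p=2$ with $\alpha\ge3$). When $9\mid N$ you give an explicit $3$-adic witness with $\nu_3(a)=\alpha-1$, which up to a unit is the paper's $a=N/3$, $b=1$. Otherwise you observe that the witness prime always violates \eqref{eq:eta-cond}. The $p=2$ case you flag as the main obstacle is already settled by your own bullet, because $\eta(2,\alpha)=\alpha-1$ for every $\alpha\ge1$ forces $\nu_2(2a)\ge\alpha$.
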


The proof is short at its core, but the details matter because of the modulus $2N$ in \eqref{eq:zc}.  The invariant is the third finite difference of the lifted phase
\begin{equation}
        \Phi_{a,b,q,N}(X)
        =(aX^2+bX+\eps_N+2q)(aX^2+bX).                 \label{eq:Phi}
\end{equation}
Since a quadratic phase has zero third finite difference, the only possible way for a QPP-interleaved Zadoff--Chu phase to become an ordinary Zadoff--Chu phase is for the cubic and quartic parts of \eqref{eq:Phi} to collapse as polynomial functions over the local residue rings.  The collapse occurs exactly at the exceptional valuations in \eqref{eq:eta}; for primes $p>3$ it occurs only when $a$ vanishes modulo $p^\alpha$.

\section{Quadratic Phases and Zadoff--Chu Equivalence}

This section converts the sequence-equivalence problem into a congruence problem for quadratic phase functions.  The conversion is routine, but it is the point at which the factor $2N$ enters.

\begin{lemma}[representative independence]                                  \label{lem:rep}
Let $\eps=\eps_N$.  If $T\equiv T'\pmod N$, then, for every $q\in\Z$,
\begin{equation}
        T(T+\eps+2q)\equiv T'(T'+\eps+2q)\pmod{2N}.          \label{eq:rep-ind}
\end{equation}
Consequently \eqref{eq:zc} is well defined as a function on $\Z_N$.
\end{lemma}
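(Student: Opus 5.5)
We write $T'=T+mN$ for some $m\in\Z$, expand the difference of the two products, and check divisibility by $2N$. Expanding gives
\[
T'(T'+\eps+2q)-T(T+\eps+2q)=mN\bigl(2T+mN+\eps+2q\bigr).
\]
The factor $2q$ is even and the remaining terms contribute $2T+mN+\eps\equiv mN+\eps\pmod 2$. So the whole claim reduces to showing that $mN+\eps_N$ is even for every integer $m$. Once that parity is known, the difference is $mN$ times an even integer, hence divisible by $2N$, which is \eqref{eq:rep-ind}.

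The parity check splits by the parity of $N$, and this is the one place where the convention $\eps_N=N\bmod 2$ matters.
\begin{itemize}[leftmargin=*]
\item If $N$ is even, then $\eps_N=0$ and $mN$ is even, so $mN+\eps_N$ is even.
\item If $N$ is odd, then $\eps_N=1$. Here $mN+1\equiv m+1\pmod 2$, which is odd when $m$ is even, so the argument needs a finer split on $m$.
\begin{itemize}
\item For $m=2m'$ the difference is already $2m'N(\cdots)$, which is divisible by $2N$.
\item For $m$ odd, $mN$ is odd, so $mN+1$ is even and the difference is again divisible by $2N$.
\end{itemize}
\end{itemize}
In every case $2N$ divides the difference.

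For the consequence, recall that $\zeta_N$ has order $2N$. By \eqref{eq:rep-ind}, the exponent $u(t+\eps+2q)t$ in \eqref{eq:zc} is determined modulo $2N$ by $t\bmod N$ alone. Hence \eqref{eq:zc} is well defined as a function on $\Z_N$.

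The only real obstacle is the parity bookkeeping in the odd-$N$ case, which the split on $m$ handles.
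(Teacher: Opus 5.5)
Your proof is correct and is essentially the paper's argument: you expand with $T'=T+mN$ to get $mN(2T+mN+\eps+2q)$, then check divisibility by $2N$ by splitting on the parity of $N$ and, for odd $N$, on the parity of $m$. One wording slip: the claim ``reduces to showing that $mN+\eps_N$ is even for every integer $m$'' is false for odd $N$ and even $m$, as you notice yourself, so the correct reduction (the paper's phrasing) is that the product $m(2T+mN+\eps+2q)$ is even.
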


\begin{proof}
Write $T'=T+hN$.  Then
\begin{align}
T'(T'+\eps+2q)-T(T+\eps+2q)
 &=hN(2T+\eps+2q)+h^2N^2 .                         \label{eq:rep-proof-1}
\end{align}
If $N$ is even, then $\eps=0$ and both summands in \eqref{eq:rep-proof-1} are divisible by $2N$.  If $N$ is odd, then $\eps=1$ and
\begin{equation}
T'(T'+1+2q)-T(T+1+2q)
        =hN(2T+1+2q+hN).                                  \label{eq:rep-proof-odd}
\end{equation}
The factor $h(2T+1+2q+hN)$ is even: if $h$ is even this is immediate, while if $h$ is odd then $hN$ is odd and the parenthesis is even.  Hence \eqref{eq:rep-proof-odd} is divisible by $2N$.
\end{proof}

\begin{definition}[admissible quadratic phase]                              \label{def:quad}
A function $F:\Z_N\to\Z_{2N}$ is called an admissible Zadoff--Chu quadratic phase if there are integers $A,B,C$ satisfying
\begin{equation}
        F(k)\equiv Ak^2+Bk+C\pmod{2N}                         \label{eq:quad-phase}
\end{equation}
for all $k\in\Z_N$, with
\begin{equation}
        (A,N)=1,
        \qquad
        B\equiv A\eps_N\pmod 2 .                              \label{eq:quad-parity}
\end{equation}
\end{definition}

The congruence in \eqref{eq:quad-phase} is independent of the integer representative of $k$.  Indeed,
\begin{equation}
A(k+N)^2+B(k+N)+C-(Ak^2+Bk+C)=N(2Ak+AN+B),                 \label{eq:quad-period}
\end{equation}
and the last factor is even when $N$ is even because $B\equiv0\pmod2$, and when $N$ is odd because $B\equiv A\pmod2$.  The parity condition in \eqref{eq:quad-parity} is simply the condition that the exponent in the $\zeta_N$ model has the parity of a Zadoff--Chu phase.  It is automatic after expansion of \eqref{eq:equiv-expanded}, but recording it avoids a common ambiguity between congruences modulo $N$ and congruences modulo $2N$.

\begin{lemma}[quadratic-phase equivalence]                                  \label{lem:quad-equiv}
A unimodular sequence $s(k)=\zeta_N^{F(k)}$ is equivalent to a Zadoff--Chu sequence under \eqref{eq:op-rot}--\eqref{eq:op-conj} if and only if $F$ is an admissible Zadoff--Chu quadratic phase.
\end{lemma}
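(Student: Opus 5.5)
The plan is to compare exponents modulo $2N$, using that $\zeta_N$ has exact order $2N$. The sequence $s=\zeta_N^{F}$ then determines $F$ as a function $\Z_N\to\Z_{2N}$, and conversely. Throughout, I regard sequences on $\Z_N$ as $N$-periodic functions on $\Z$ and read every exponent identity as a congruence modulo $2N$ valid for all integers $k$. Two preliminary remarks will be used in both directions.

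\emph{Rotation constants.} If $s(k)=c\,\zeta_N^{G(k)}$ with $G$ integer valued, then evaluating at $k=0$ gives $c=\zeta_N^{F(0)-G(0)}$. So $c$ is automatically a $2N$-th root of unity and can be absorbed into a constant term $C$.

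\emph{Parity is automatic.} If an integer quadratic $Ak^2+Bk+C$ agrees modulo $2N$ with an $N$-periodic function for all $k\in\Z$, then \eqref{eq:quad-period} gives $N(2Ak+AN+B)\equiv0\pmod{2N}$, that is, $AN+B$ is even. For $N$ even this says $B$ is even. For $N$ odd it says $A\equiv B\pmod 2$. In both cases this is exactly $B\equiv A\eps_N\pmod2$. This observation removes the representative ambiguity for $u'$ and $r$, which are only residues modulo $N$ while the exponent lives modulo $2N$.

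\textbf{Necessity.} Assume $s$ is equivalent to a Zadoff--Chu sequence. Up to conjugation, it has the form \eqref{eq:equiv-expanded}, where I fix integer representatives of $u',r,d,\lambda,q'$. Since $W_N=\zeta_N^2$, taking exponents modulo $2N$ gives
\[
F(k)\equiv \sigma u'\bigl((rk+d)^2+(\eps_N+2q')(rk+d)\bigr)+2\lambda k+C \pmod{2N}
\]
for all integers $k$. By Lemma~\ref{lem:rep}, the right side is $N$-periodic in $k$. Expanding it gives $A=\sigma u'r^2$, which is coprime to $N$ because $u'$ and $r$ are units modulo $N$. It also gives $B=\sigma u'(2rd+(\eps_N+2q')r)+2\lambda$. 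The parity condition on $B$ then follows from the second preliminary remark rather than from a direct parity count, which sidesteps the dependence on representatives.

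I also need that the five operations are closed under composition in the sense that a composite still has the displayed form. Translation, decimation and linear-frequency modulation each preserve the shape ``quadratic with unit leading coefficient'' modulo $2N$, and conjugation negates the exponent. I take this closure as the content of \eqref{eq:equiv-expanded}, which the paper adopts as the definition of equivalence.

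\textbf{Sufficiency.} Let $F\equiv Ak^2+Bk+C$ with $(A,N)=1$ and $B\equiv A\eps_N\pmod2$. Take $\sigma=1$, $r=1$, $d=0$, $q'=0$ and $c=\zeta_N^{C}$. Take the integer $u'=A$ itself, not merely its residue modulo $N$, since the exponent modulo $2N$ can depend on the representative when $N$ is even. Finally take $\lambda\equiv (B-A\eps_N)/2\pmod N$, which is an integer by the parity hypothesis. The right side of \eqref{eq:equiv-expanded} then has exponent $Ak^2+A\eps_Nk+2\lambda k+C\equiv Ak^2+Bk+C \pmod{2N}$, as required.

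\textbf{Main obstacle.} The delicate point throughout is the modulus $2N$ versus $N$: which integer representatives are legitimate, and why the parity constraint is neither lost nor spuriously imposed. I would handle both issues uniformly through the periodicity computation \eqref{eq:quad-period} and Lemma~\ref{lem:rep}, rather than through case-by-case parity bookkeeping.
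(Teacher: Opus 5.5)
Your proposal is correct and follows essentially the same route as the paper. In one direction you expand \eqref{eq:equiv-expanded} modulo $2N$ to read off $A=\sigma u'r^2$; in the other you take $\sigma=1$, $r=1$, $d=0$, $q'=0$, the integer root $u'=A$, and $2\lambda\equiv B-A\eps_N\pmod{2N}$. The only differences are minor and both valid: you obtain the parity condition $B\equiv A\eps_N\pmod 2$ from the $N$-periodicity of the expanded exponent (Lemma~\ref{lem:rep} with \eqref{eq:quad-period}) rather than from the paper's direct computation $B-A\eps_N\equiv\sigma u'\eps_N r(1-r)\pmod 2$, and you make explicit that the rotation constant $c$ is forced to be a power of $\zeta_N$.
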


\begin{proof}
Assume first that $s$ has the form \eqref{eq:equiv-expanded}.  Expanding the exponent gives
\begin{align}
F(k)
 &\equiv C
       +2\lambda k
       +\sigma u'(rk+d+\eps_N+2q')(rk+d)                  \notag\\
 &\equiv C'
       +\bigl(2\lambda+\sigma u'r(2d+\eps_N+2q')\bigr)k
       +\sigma u'r^2 k^2
       \pmod{2N}.                                          \label{eq:quad-from-ops}
\end{align}
Thus $A=\sigma u'r^2$ is a unit modulo $N$, and
\begin{equation}
B-A\eps_N
   \equiv \sigma u'r(2d+\eps_N+2q')-\sigma u'r^2\eps_N
   \pmod 2.                                                \label{eq:parity-check-1}
\end{equation}
Modulo $2$, the right-hand side of \eqref{eq:parity-check-1} is $\sigma u'\eps_N r(1-r)$.  This is zero if $N$ is even, because $\eps_N=0$, and it is also zero if $N$ is odd, because $r(1-r)$ is always even.  Hence \eqref{eq:quad-parity} follows.

Conversely, suppose \eqref{eq:quad-phase}--\eqref{eq:quad-parity} hold.  Take $r=1$, $d=0$, $q'=0$, $\sigma=1$, and take the Zadoff--Chu root $u'$ to be the integer representative $A$.  This is allowed because $(A,N)=1$.  Because $B-A\eps_N$ is even, there exists $\lambda\in\Z_N$ such that
\begin{equation}
        2\lambda \equiv B-A\eps_N\pmod{2N}.                \label{eq:lambda-choice}
\end{equation}
Then
\begin{equation}
        A(k+\eps_N)k+2\lambda k
        \equiv Ak^2+Bk\pmod{2N}.                           \label{eq:reverse-phase-match}
\end{equation}
Absorbing $C$ into the unimodular rotation gives $s(k)$.  Thus the sequence is equivalent to a Zadoff--Chu sequence.
\end{proof}

For the interleaved sequence \eqref{eq:interleaved}, the lifted exponent is
\begin{equation}
        F_{u,a,b,q,N}(k)
        =u\Phi_{a,b,q,N}(k)\pmod{2N},                  \label{eq:F}
\end{equation}
where $\Phi$ is defined in \eqref{eq:Phi}.  Since $u$ is a unit modulo every prime divisor of $N$, the local valuation obstruction is independent of $u$.  The parameter $q$ also disappears from the obstruction because
\begin{equation}
\Phi_{a,b,q,N}(X)
=(aX^2+bX)^2+(\eps_N+2q)(aX^2+bX),                       \label{eq:Phi-split}
\end{equation}
and the second summand is a polynomial of degree at most two in $X$.

\section{Local QPP Conditions}

We shall use only the elementary local form of the standard QPP criterion over integer residue rings.  It follows from the Chinese remainder theorem and the usual prime-power derivative test for permutation polynomials.

\begin{lemma}[local QPP criterion]                                      \label{lem:qpp}
Let $p^\alpha\Vert N$ and let $\pi_{a,b}(X)=aX^2+bX$ be a normalized QPP over $\Z_N$.  Then the following local conditions hold.
\begin{align}
        p\text{ odd}:        &\qquad p\nmid b,
                                      \qquad p\mid a,                         \label{eq:qpp-odd}\\
        p=2,\ \alpha=1:     &\qquad a+b\equiv1\pmod2,                         \label{eq:qpp-two-one}\\
        p=2,\ \alpha\ge2:  &\qquad b\equiv1\pmod2,
                                      \qquad a\equiv0\pmod2.                  \label{eq:qpp-two-high}
\end{align}
Conversely, the congruences \eqref{eq:qpp-odd}--\eqref{eq:qpp-two-high}, imposed for all prime powers $p^\alpha\Vert N$, are sufficient for $\pi_{a,b}$ to permute $\Z_N$.
\end{lemma}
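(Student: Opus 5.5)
By the Chinese remainder theorem, $\pi_{a,b}$ permutes $\Z_N$ if and only if its reduction permutes $\Z_{p^\alpha}$ for every $p^\alpha\Vert N$. So both directions reduce to the single statement that $aX^2+bX$ permutes $\Z_{p^\alpha}$ exactly under the listed local congruences. For that local statement I would use the standard Hensel-type criterion: for $\alpha\ge2$, a polynomial $f$ permutes $\Z_{p^\alpha}$ if and only if it permutes $\Z_p$ and $f'(x)\not\equiv0\pmod p$ for all $x\in\Z_p$. For $\alpha=1$ only the first condition applies. Here $f'(X)=2aX+b$.

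\emph{Odd $p$, $\alpha=1$.} The map $x\mapsto ax^2+bx$ on $\Z_p$ needs $a\equiv0$: if $a\not\equiv0$, then $f(x)=f(-b/a-x)$, and $x\ne -b/a-x$ whenever $x\ne -b/(2a)$, so $f$ is not injective. Once $a\equiv 0$, the map is linear, so it permutes $\Z_p$ if and only if $p\nmid b$.

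\emph{Odd $p$, $\alpha\ge2$.} Permuting $\Z_p$ again forces $p\mid a$ and $p\nmid b$. The derivative is then $2ax+b\equiv b\not\equiv0$, so the derivative condition is automatic. This gives \eqref{eq:qpp-odd}.

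\emph{$p=2$, $\alpha=1$.} On $\Z_2$ we have $x^2=x$, so $f(x)=(a+b)x$, which is a permutation if and only if $a+b$ is odd. This gives \eqref{eq:qpp-two-one}.

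\emph{$p=2$, $\alpha\ge2$.} The mod-2 permutation condition is $a+b$ odd. The derivative condition becomes $2ax+b\equiv b\not\equiv0\pmod 2$, so $b$ is odd. Together these force $a$ even, giving \eqref{eq:qpp-two-high}.

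Each of these is an equivalence, so gluing by CRT yields both the necessity of the local conditions and the converse sufficiency.

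The main point to watch is the Hensel lifting criterion itself, which I will cite from Lidl--Niederreiter or Rivest. Should a self-contained argument be needed, I will prove directly that $f(x+p^{\alpha-1}t)\equiv f(x)+p^{\alpha-1}tf'(x)\pmod{p^\alpha}$ for $\alpha\ge2$. Injectivity on each fibre over $\Z_{p^{\alpha-1}}$ then amounts to $f'(x)\not\equiv0\pmod p$, and induction on $\alpha$ finishes the argument. The only delicacy in this expansion is the second-order term $a p^{2\alpha-2}t^2$, which vanishes mod $p^\alpha$ because $2\alpha-2\ge\alpha$ when $\alpha\ge2$.
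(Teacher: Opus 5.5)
Your proof is correct. It shares the paper's Chinese-remainder reduction and its mod-$p$ analysis, but it handles the passage from $\Z_p$ to $\Z_{p^\alpha}$ by a different mechanism.

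The paper never uses the derivative criterion.
\begin{itemize}
\item For sufficiency it uses the degree-two factorization $\pi_{a,b}(x)-\pi_{a,b}(y)=(x-y)\bigl(a(x+y)+b\bigr)$. Under the stated congruences the second factor is a unit modulo $p^\alpha$; for $p=2$ this holds because $a$ even and $b$ odd make it odd. This gives injectivity in one line, with no induction on $\alpha$.
\item For the $2$-adic necessity when $\alpha\ge2$, it exhibits an explicit collision: if $a$ is odd and $b$ even, then $\pi_{a,b}(0)\equiv\pi_{a,b}(2)\equiv0\pmod4$.
\end{itemize}

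You instead invoke the Hensel-type lifting criterion: for $\alpha\ge2$, $f$ permutes $\Z_{p^\alpha}$ if and only if it permutes $\Z_p$ and $f'$ has no zero modulo $p$. What this buys you:
\begin{itemize}
\item It delivers necessity and sufficiency together, uniformly in $p$.
\item It makes transparent why $\alpha\ge2$ at $p=2$ forces $b$ odd: the $2ax$ term of $f'$ vanishes modulo $2$.
\item It would apply unchanged to the higher-degree polynomial interleavers the paper mentions later.
\end{itemize}
The cost is that you must cite or prove the lifting lemma, whereas the paper's argument is self-contained for quadratics. Your sketch of that lemma, via $f(x+p^{\alpha-1}t)\equiv f(x)+p^{\alpha-1}tf'(x)\pmod{p^\alpha}$ and induction on $\alpha$, is sound.

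The two routes are closely related. The paper's factor $a(x+y)+b$ is the divided difference of $f$, and it reduces to $f'(x)$ when $x\equiv y\pmod p$.

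Your version also proves, via the involution $x\mapsto -b/a-x$, that a permutation quadratic over $\mathbb F_p$ with $p$ odd must have vanishing leading coefficient. The paper simply asserts this fact.
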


\begin{proof}
By the Chinese remainder theorem, $\pi_{a,b}$ permutes $\Z_N$ if and only if it permutes every $\Z_{p^\alpha}$.  For odd $p$, the reduction modulo $p$ is $aX^2+bX$.  A quadratic polynomial over $\mathbb F_p$ is a permutation only if its quadratic coefficient vanishes and its linear coefficient is nonzero; this gives $p\mid a$ and $p\nmid b$.  Conversely, if $p\mid a$ and $p\nmid b$, then
\begin{equation}
        \pi_{a,b}(x)-\pi_{a,b}(y)
        =(x-y)\bigl(a(x+y)+b\bigr).                       \label{eq:qpp-factor}
\end{equation}
The factor $a(x+y)+b$ is a unit modulo $p^\alpha$, so $\pi_{a,b}(x)\equiv\pi_{a,b}(y)\pmod{p^\alpha}$ implies $x\equiv y\pmod{p^\alpha}$.  Thus the odd-prime conditions are also sufficient.

For $p=2$ and $\alpha=1$, one only needs the map on $\mathbb F_2$ to be nonconstant.  Since $X^2=X$ on $\mathbb F_2$, this is precisely $a+b\equiv1\pmod2$.

For $p=2$ and $\alpha\ge2$, reduction modulo $2$ first gives $a+b\equiv1\pmod2$.  If $a$ were odd, then $b$ would be even and, modulo $4$,
\begin{equation}
        \pi_{a,b}(0)\equiv0,
        \qquad
        \pi_{a,b}(2)=4a+2b\equiv0,                         \label{eq:qpp-two-necessity}
\end{equation}
contradicting injectivity modulo $4$.  Hence $a$ is even and $b$ is odd.  Conversely, if $a$ is even and $b$ is odd, then the factor $a(x+y)+b$ in \eqref{eq:qpp-factor} is odd and therefore a unit modulo $2^\alpha$; hence the same injectivity argument proves that $\pi_{a,b}$ permutes $\Z_{2^\alpha}$.
\end{proof}

The following notation isolates the local threshold in Theorem \ref{thm:local}:
\begin{equation}
\eta(p,\alpha)=
\begin{cases}
0, & p=2,\ \alpha=1,\\
\alpha-1, & p=2,\ \alpha\ge2,\\
\alpha-1, & p=3,\\
\alpha, & p>3.
\end{cases}                                                
\end{equation}
Thus Theorem \ref{thm:local} says that Zadoff--Chu equivalence is completely controlled by the inequalities
\begin{equation}
        \nu_p(a)\ge\eta(p,\alpha_p),
        \qquad p^{\alpha_p}\Vert N.                       \label{eq:eta-all}
\end{equation}

\section{The Finite-Difference Obstruction}

Let $\Delta$ denote the forward difference operator,
\begin{equation}
        (\Delta P)(X)=P(X+1)-P(X),
        \qquad
        \Delta^{j+1}P=\Delta(\Delta^jP).                   \label{eq:Delta}
\end{equation}
For the monomials needed here,
\begin{align}
        \Delta^3 X^2 &=0,                                  \label{eq:d3x2}\\
        \Delta^3 X^3 &=6,                                  \label{eq:d3x3}\\
        \Delta^3 X^4 &=24X+36.                             \label{eq:d3x4}
\end{align}
Combining \eqref{eq:Phi-split} with \eqref{eq:d3x2}--\eqref{eq:d3x4} gives the main identity.

\begin{lemma}[third difference of the interleaved phase]                    \label{lem:d3}
For every $a,b,q,N$,
\begin{equation}
        \Delta^3\Phi_{a,b,q,N}(X)
        =12a(2aX+3a+b).                                    \label{eq:d3phi}
\end{equation}
In particular, the third difference is independent of $q$ and of the parity parameter $\eps_N$.
\end{lemma}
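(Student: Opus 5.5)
The plan is a direct polynomial computation. I will use the splitting \eqref{eq:Phi-split} together with the linearity of $\Delta$ over $\Z$, and then apply the monomial formulas \eqref{eq:d3x2}--\eqref{eq:d3x4}. No congruences enter at this stage: \eqref{eq:d3phi} is an identity in $\Z[X]$ (or in $\Z[a,b,q,\eps_N][X]$), and it is reduced modulo $2N$ or $p^\alpha$ only later.

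First, by \eqref{eq:Phi-split}, $\Phi_{a,b,q,N}$ is the sum of $(aX^2+bX)^2$ and $(\eps_N+2q)(aX^2+bX)$. The second summand has degree at most two, so its third difference vanishes. Indeed $\Delta$ lowers degree by at most one, and more concretely $\Delta^3X^2=0$ by \eqref{eq:d3x2} and $\Delta^3X=0$. This single step already shows that the result is independent of $q$ and $\eps_N$.

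Second, I expand the first summand:
\[
(aX^2+bX)^2=a^2X^4+2abX^3+b^2X^2 .
\]
Applying $\Delta^3$ term by term with \eqref{eq:d3x2}--\eqref{eq:d3x4} gives
\[
a^2(24X+36)+2ab\cdot 6+0=24a^2X+36a^2+12ab=12a(2aX+3a+b),
\]
which is \eqref{eq:d3phi}.

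The monomial identities themselves I would justify, for completeness, from
\[
\Delta^3P(X)=P(X+3)-3P(X+2)+3P(X+1)-P(X).
\]
For $X^3$ this yields the constant $3!=6$. For $X^4$ the result is linear with leading coefficient $4!=24$, and evaluating at $X=0$ gives $81-48+3=36$. I expect no real obstacle here. The only point needing care is to remember that the identity holds over $\Z$ before any reduction, so that the later local analysis modulo $2N$ may use it freely.
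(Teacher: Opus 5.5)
Your proposal is correct and follows the paper's proof: you split $\Phi_{a,b,q,N}$ via \eqref{eq:Phi-split}, discard the degree-$\le 2$ part, and apply \eqref{eq:d3x2}--\eqref{eq:d3x4} to $a^2X^4+2abX^3$; your check of $\Delta^3X^4=24X+36$ via $P(X+3)-3P(X+2)+3P(X+1)-P(X)$ is a small addition the paper omits. One wording slip: $\Delta$ lowers degree by \emph{at least} one (not ``at most''), and that is what kills the quadratic part, though your direct appeal to $\Delta^3X^2=\Delta^3X=0$ already covers this.
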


\begin{proof}
From \eqref{eq:Phi-split},
\begin{align}
\Phi_{a,b,q,N}(X)
 &=a^2X^4+2abX^3+b^2X^2+(\eps_N+2q)aX^2+(\eps_N+2q)bX.       \label{eq:Phi-expanded}
\end{align}
The last three terms have degree at most two and hence have zero third difference.  Therefore
\begin{align}
\Delta^3\Phi_{a,b,q,N}(X)
 &=a^2(24X+36)+2ab\cdot6                                      \notag\\
 &=24a^2X+36a^2+12ab                                           \notag\\
 &=12a(2aX+3a+b).                                             \label{eq:d3phi-proof}
\end{align}
\end{proof}

If a phase is equivalent to an ordinary Zadoff--Chu phase, then Lemma \ref{lem:quad-equiv} says it is represented by an admissible quadratic polynomial modulo $2N$.  By \eqref{eq:quad-period}, this representative may be evaluated on arbitrary integer lifts of the residue classes, and its ordinary third forward difference is identically zero modulo $2N$.  Therefore the third difference of the lifted phase must vanish in every local modulus.  For odd primes one may reduce modulo $p^\alpha$, since $2$ is invertible and the extra parity component modulo $2$ contains no $p$-adic information.  For $p=2$, one must work modulo $2^{\alpha+1}$ because the original exponent is defined modulo $2N$.

\begin{proposition}[necessary local inequalities]                          \label{prop:nec}
Let $p^\alpha\Vert N$, let $\pi_{a,b}$ be a normalized QPP over $\Z_N$, and suppose $y_{u,q}^{a,b}$ is Zadoff--Chu-equivalent.  Then
\begin{equation}
        \nu_p(a)\ge\eta(p,\alpha).                         \label{eq:nec-eta}
\end{equation}
\end{proposition}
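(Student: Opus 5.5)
We will pass from sequence equivalence to a congruence of integer-valued functions modulo $2N$, and then read off the local valuation bounds from the third finite difference given by Lemma \ref{lem:d3}.

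\emph{Step 1 (global congruence).} Write $y_{u,q}^{a,b}(k)=\zeta_N^{F(k)}$ with $F=F_{u,a,b,q,N}$ as in \eqref{eq:F}. Since $\zeta_N$ has order exactly $2N$, $F$ is determined modulo $2N$. By Lemma \ref{lem:quad-equiv}, Zadoff--Chu equivalence gives integers $A,B,C$ satisfying \eqref{eq:quad-parity} with $u\Phi_{a,b,q,N}(k)\equiv Ak^2+Bk+C\pmod{2N}$ for $0\le k<N$. I will extend this congruence to all $k\in\Z$ by checking that both sides are $N$-periodic modulo $2N$:
\begin{itemize}
\item On the left, $\Phi(k)=T(T+\eps_N+2q)$ with $T=\pi_{a,b}(k)$, and $\pi_{a,b}(k+N)\equiv\pi_{a,b}(k)\pmod N$, so Lemma \ref{lem:rep} gives periodicity.
\item On the right, periodicity follows from \eqref{eq:quad-period} together with the parity condition \eqref{eq:quad-parity}.
\end{itemize}
Hence the congruence holds for every integer $k$. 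Applying $\Delta^3$, which kills the quadratic side by \eqref{eq:d3x2}, and using Lemma \ref{lem:d3}, we get
\[
12ua(2ak+3a+b)\equiv0\pmod{2N}\qquad\text{for all }k\in\Z .
\]
It is enough to take $k=0$, which gives $12ua(3a+b)\equiv0\pmod{2N}$.

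\emph{Step 2 (local reduction).} Fix $p^\alpha\Vert N$. For odd $p$, I reduce modulo $p^\alpha$. For $p=2$, I reduce modulo $2^{\alpha+1}$, which exactly divides $2N$. In each case $u$ is a unit. Lemma \ref{lem:qpp} gives $p\mid a$ and $p\nmid b$ for odd $p$, so $3a+b$ is a $p$-adic unit. For $p=2$ with $\alpha\ge2$, the same lemma gives $a$ even and $b$ odd, so $3a+b$ is odd. Thus in these cases the factor $3a+b$ may be cancelled, leaving a condition on $12a$ alone.

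\emph{Step 3 (case analysis).}
\begin{itemize}
\item $p>3$: $12$ is a unit, so $a\equiv0\pmod{p^\alpha}$, i.e.\ $\nu_p(a)\ge\alpha$.
\item $p=3$: $12=3\cdot4$, so $3a\equiv0\pmod{3^\alpha}$, i.e.\ $\nu_3(a)\ge\alpha-1$.
\item $p=2$, $\alpha=1$: the required bound $\eta=0$ holds trivially.
\item $p=2$, $\alpha\ge2$: $12=4\cdot3$, so $4a\equiv0\pmod{2^{\alpha+1}}$, giving $\nu_2(a)\ge\alpha-1$.
\end{itemize}
These are exactly the thresholds $\eta(p,\alpha)$ in \eqref{eq:eta}.

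The main obstacle is Step 1. The equality $\zeta_N^{F}=\zeta_N^{Ak^2+Bk+C}$ is only given on the residues $0\le k<N$, whereas $\Delta^3$ evaluates the phases at the shifted integers $k,k+1,k+2,k+3$. These can exceed $N-1$, and modulo $2N$ a naive wrap-around could create a spurious factor $N$. This is precisely why I invoke Lemma \ref{lem:rep} and \eqref{eq:quad-period} before differencing. It is also why the $2$-adic case must be handled modulo $2^{\alpha+1}$ rather than $2^\alpha$: working modulo $2^\alpha$ would give only $\nu_2(a)\ge\alpha-2$.
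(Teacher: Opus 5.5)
Your proposal is correct and follows the paper's proof. Both pass through Lemma~\ref{lem:quad-equiv} to a quadratic representative modulo $2N$, evaluate $\Delta^3\Phi_{a,b,q,N}(0)=12a(3a+b)$, use Lemma~\ref{lem:qpp} to make $3a+b$ a local unit, and work modulo $p^\alpha$ for odd $p$ and modulo $2^{\alpha+1}$ for $p=2$. The differences are cosmetic: you cancel units directly rather than arguing by contradiction on valuations, and you make explicit the $N$-periodicity of both sides (via Lemma~\ref{lem:rep} and \eqref{eq:quad-period}), which the paper invokes only in the paragraph before the proposition.
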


\begin{proof}
The factor $u$ in \eqref{eq:F} is a unit locally, so it may be ignored for valuation purposes.

First let $p>3$.  By Lemma \ref{lem:qpp}, $p\nmid b$ and $p\mid a$.  If $t=\nu_p(a)<\alpha$, then
\begin{equation}
        3a+b\equiv b\not\equiv0\pmod p,                     \label{eq:pgt3-unit}
\end{equation}
and, since $p\nmid12$,
\begin{equation}
        \nu_p\bigl(\Delta^3\Phi_{a,b,q,N}(0)\bigr)
        =\nu_p(12a(3a+b))
        =t<\alpha.                                          \label{eq:pgt3-nonzero}
\end{equation}
Thus $\Delta^3\Phi_{a,b,q,N}(0)$ is nonzero modulo $p^\alpha$, contradicting the existence of a quadratic representative.  Hence $\nu_p(a)\ge\alpha$, which is \eqref{eq:eta} for $p>3$.

Next let $p=3$.  Again $3\nmid b$ and $3\mid a$.  If $t=\nu_3(a)\le\alpha-2$, then
\begin{equation}
        \nu_3(3a+b)=0,
        \qquad
        \nu_3(12)=1,                                        \label{eq:p3-units}
\end{equation}
and hence
\begin{equation}
        \nu_3\bigl(\Delta^3\Phi_{a,b,q,N}(0)\bigr)
        =1+t<\alpha.                                        \label{eq:p3-nonzero}
\end{equation}
This contradicts local quadraticity modulo $3^\alpha$.  Therefore $\nu_3(a)\ge\alpha-1$.

Now let $p=2$ and $\alpha\ge2$.  By Lemma \ref{lem:qpp}, $b$ is odd and $a$ is even.  If $t=\nu_2(a)\le\alpha-2$, then $3a+b$ is odd and
\begin{equation}
        \nu_2\bigl(\Delta^3\Phi_{a,b,q,N}(0)\bigr)
        =\nu_2(12)+\nu_2(a)
        =2+t<\alpha+1.                                      \label{eq:p2-nonzero}
\end{equation}
Thus the third difference is nonzero modulo $2^{\alpha+1}$, contradicting quadraticity of the lifted phase modulo $2N$.  Hence $\nu_2(a)\ge\alpha-1$.

Finally, if $p=2$ and $\alpha=1$, the threshold is $\eta(2,1)=0$, so there is no condition to prove.
\end{proof}

The proof above explains the exceptional primes.  The factor $12$ in \eqref{eq:d3phi} contributes one power of $3$ and two powers of $2$.  Those powers are exactly what permit a cubic term to masquerade as a linear term modulo $3^\alpha$ or $2^{\alpha+1}$ at the penultimate valuation.

\section{Local Sufficiency}

We now prove that the necessary conditions in Proposition \ref{prop:nec} are sufficient.  The proof is constructive: in every local component the quartic and cubic terms in \eqref{eq:Phi-expanded} reduce to a quadratic function.

\begin{lemma}[the odd-prime reductions]                                   \label{lem:odd-suff}
Let $p^\alpha\Vert N$ be odd, and suppose $\pi_{a,b}$ is a normalized QPP over $\Z_N$.
\begin{enumerate}[label=(\roman*)]
\item If $p>3$ and $\nu_p(a)\ge\alpha$, then
\begin{equation}
        \Phi_{a,b,q,N}(X)
        \equiv b^2X^2+(1+2q)bX
        \pmod{p^\alpha}                                      \label{eq:pgt3-quad}
\end{equation}
when $N$ is odd, and with $1$ replaced by $0$ when $N$ is even.
\item If $p=3$ and $\nu_3(a)\ge\alpha-1$, then $\Phi_{a,b,q,N}(X)$ is congruent modulo $3^\alpha$ to a quadratic polynomial in $X$ whose quadratic coefficient is congruent to $b^2$ modulo $3$.
\end{enumerate}
\end{lemma}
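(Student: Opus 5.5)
The plan is to work directly with the expanded form \eqref{eq:Phi-expanded},
\[
\Phi_{a,b,q,N}(X)=a^2X^4+2abX^3+b^2X^2+(\eps_N+2q)aX^2+(\eps_N+2q)bX,
\]
and to show that, under the stated valuation hypothesis, the quartic and cubic monomials are congruent modulo $p^\alpha$ to polynomials of degree at most two. Throughout, $X$ ranges over the integers, so only the polynomial \emph{functions} matter. Here $p$ is odd, so the congruence modulo $p^\alpha$ is exactly the local statement needed.

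For part (i), assume $\nu_p(a)\ge\alpha$, so that $a\equiv0\pmod{p^\alpha}$. Then $a^2$, $2ab$ and $(\eps_N+2q)a$ all vanish modulo $p^\alpha$. What remains is $b^2X^2+(\eps_N+2q)bX$, which is \eqref{eq:pgt3-quad}: it has $\eps_N=1$ when $N$ is odd and $\eps_N=0$ when $N$ is even. This part is immediate.

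For part (ii), set $t=\nu_3(a)\ge\alpha-1$. If $t\ge\alpha$, the computation of part (i) already applies, so assume $t=\alpha-1$ with $\alpha\ge1$, and write $a=3^{\alpha-1}a_0$.

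\emph{Quartic term.} We have $a^2=3^{2\alpha-2}a_0^2$, and $2\alpha-2\ge\alpha$ as soon as $\alpha\ge2$, so for $\alpha\ge2$ this term vanishes modulo $3^\alpha$. For $\alpha=1$ we use instead the polynomial-function identity $X^3\equiv X\pmod 3$ (Fermat), which gives $X^4\equiv X^2\pmod 3$. So the quartic term becomes $a^2X^2$ modulo $3$.

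\emph{Cubic term.} This is $2abX^3=2b\,3^{\alpha-1}a_0X^3$. Since $X^3\equiv X\pmod 3$, we have $3^{\alpha-1}X^3\equiv3^{\alpha-1}X\pmod{3^\alpha}$. The cubic term therefore becomes the linear term $2abX$.

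\emph{Remaining $aX^2$ term.} This is already quadratic and is kept as it is.

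\emph{Quadratic coefficient.} Collecting terms, the quadratic coefficient is $b^2+(\eps_N+2q)a$, plus $a^2$ in the case $\alpha=1$. Since $3\mid a$ by Lemma~\ref{lem:qpp}, it is congruent to $b^2$ modulo $3$.

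The only step with real content is the cubic-to-linear replacement. It relies on the fact that the single extra power of $3$ in $a$ kills $X^3-X$, which is always divisible by $3$ (indeed by $6$). This is exactly the mechanism foreshadowed by the factor $12$ in Lemma~\ref{lem:d3}. The bookkeeping at $\alpha=1$ for the quartic term is the one point needing separate care. The conclusion about the quadratic coefficient being $\equiv b^2\pmod 3$ matters later, because $3\nmid b$ makes that coefficient a unit at $3$, as Definition~\ref{def:quad} requires.
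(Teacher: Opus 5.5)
Your proof is correct and follows the paper's argument. When $a\equiv0\pmod{p^\alpha}$ every $a$-term vanishes; when $\nu_3(a)=\alpha-1$, the quartic term drops because $2\alpha-2\ge\alpha$, and $2abX^3$ becomes $2abX$ because $3\mid X^3-X$. Your separate treatment of $\alpha=1$, $\nu_3(a)=0$ via $X^4\equiv X^2\pmod 3$ is harmless but vacuous: Lemma~\ref{lem:qpp} already forces $3\mid a$, which you yourself use at the end, so the paper simply reduces $\alpha=1$ to the computation of part (i).
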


\begin{proof}
The first assertion is immediate from $a\equiv0\pmod{p^\alpha}$ in \eqref{eq:Phi-expanded}.

For $p=3$, if $\alpha=1$, Lemma \ref{lem:qpp} gives $3\mid a$, so $a\equiv0\pmod{3^\alpha}$ and the preceding case applies.  Hence assume $\alpha\ge2$.  The case $a\equiv0\pmod{3^\alpha}$ is again the same.  It remains to consider
\begin{equation}
        a=3^{\alpha-1}c,
        \qquad 3\nmid c.                                    \label{eq:a3-high}
\end{equation}
Then $a^2X^4\equiv0\pmod{3^\alpha}$ because $2\alpha-2\ge\alpha$.  Also
\begin{equation}
        X^3-X=X(X-1)(X+1)\equiv0\pmod3,                     \label{eq:x3x-3}
\end{equation}
so
\begin{equation}
        2abX^3\equiv 2abX\pmod{3^\alpha}.                  \label{eq:cubic-to-linear-3}
\end{equation}
Substituting into \eqref{eq:Phi-expanded} gives
\begin{align}
\Phi_{a,b,q,N}(X)
 &\equiv \bigl(b^2+(\eps_N+2q)a\bigr)X^2
       +\bigl(2ab+(\eps_N+2q)b\bigr)X
       \pmod{3^\alpha}.                                    \label{eq:p3-quad}
\end{align}
The displayed quadratic coefficient is congruent to $b^2\not\equiv0\pmod3$, as required.
\end{proof}

The two-adic component has to be treated modulo $2^{\alpha+1}$, not merely modulo $2^\alpha$.

\begin{lemma}[the two-adic reduction]                                     \label{lem:two-suff}
Let $2^\alpha\Vert N$ with $\alpha\ge2$, and suppose $\pi_{a,b}$ is a normalized QPP over $\Z_N$.  If $\nu_2(a)\ge\alpha-1$, then $\Phi_{a,b,q,N}(X)$ is congruent modulo $2^{\alpha+1}$ to a quadratic polynomial in $X$ whose quadratic coefficient is odd and whose linear coefficient is even.
\end{lemma}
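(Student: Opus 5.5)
The plan is to reduce each monomial of the expansion \eqref{eq:Phi-expanded} modulo $2^{\alpha+1}$ separately, using only the valuation hypothesis and the fact that $X^m\equiv X^{m'}\pmod 2$ on integers whenever $m,m'\ge1$. Since $2\mid N$, we have $\eps_N=0$, so
\[
\Phi_{a,b,q,N}(X)=a^2X^4+2abX^3+b^2X^2+2qaX^2+2qbX .
\]
By Lemma \ref{lem:qpp}, $a$ is even and $b$ is odd. Write $t=\nu_2(a)$ with $t\ge\alpha-1\ge1$; if $a\equiv0\pmod{2^\alpha}$, the same argument works with $t\ge\alpha$.

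First treat the cubic term. Since $\nu_2(2ab)=1+t\ge\alpha$, it suffices to know $X^3$ modulo $2$. From $X^3-X=(X-1)X(X+1)\equiv0\pmod 2$ we get
\[
2abX^3\equiv 2abX\pmod{2^{\alpha+1}}.
\]
This plays the role that \eqref{eq:cubic-to-linear-3} plays at $p=3$.

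Next treat the quartic term, which needs a small case split.
\begin{itemize}
\item If $\alpha\ge3$, then $\nu_2(a^2)=2t\ge2\alpha-2\ge\alpha+1$, so $a^2X^4\equiv0\pmod{2^{\alpha+1}}$.
\item If $\alpha=2$, then $t\ge1$ only gives $\nu_2(a^2)\ge2=\alpha$. Here we again use a parity reduction: $X^4-X^2=X^2(X-1)(X+1)\equiv0\pmod2$, so $a^2X^4\equiv a^2X^2\pmod 8$.
\end{itemize}
This $\alpha=2$ case is the only delicate step. It is exactly why one must work modulo $2^{\alpha+1}$ rather than $2^\alpha$, and it should be flagged explicitly.

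Collecting terms gives a quadratic representative modulo $2^{\alpha+1}$:
\[
\Phi_{a,b,q,N}(X)\equiv \bigl(b^2+2qa+\delta a^2\bigr)X^2+\bigl(2ab+2qb\bigr)X\pmod{2^{\alpha+1}},
\]
where $\delta=1$ if $\alpha=2$ and $\delta=0$ otherwise. The quadratic coefficient is odd, because $b^2$ is odd while $2qa$ and $a^2$ are even. The linear coefficient $2b(a+q)$ is visibly even.

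These two parity facts are what the later gluing with the odd components requires. An odd quadratic coefficient gives the unit condition at $2$ in \eqref{eq:quad-parity}, and an even linear coefficient gives $B\equiv A\eps_N=0\pmod 2$. No further obstacle is expected beyond the $\alpha=2$ quartic reduction.
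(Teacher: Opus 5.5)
Your proof is correct and is essentially the paper's argument: both reduce $2abX^3$ to $2abX$ via $X^3\equiv X\pmod 2$ and reduce the quartic using $X^4\equiv X^2$. The paper applies the quartic congruence modulo $4$ uniformly and splits instead on $\nu_2(a)\ge\alpha$ versus $\nu_2(a)=\alpha-1$, while you split on $\alpha=2$ versus $\alpha\ge3$; this is only cosmetic, since $a^2\equiv0\pmod{2^{\alpha+1}}$ once $\alpha\ge3$. One small expository correction: the modulus $2^{\alpha+1}$ is forced because the phase is defined modulo $2N$, so the $\alpha=2$ quartic reduction is a consequence of that choice, not the reason for it.
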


\begin{proof}
If $a\equiv0\pmod{2^\alpha}$, then the claim is immediate because
\begin{equation}
        \Phi_{a,b,q,N}(X)\equiv b^2X^2+2qbX\pmod{2^{\alpha+1}},             \label{eq:a-zero-two}
\end{equation}
and $b$ is odd.  Otherwise write
\begin{equation}
        a=2^{\alpha-1}c,
        \qquad c\text{ odd}.                                \label{eq:a2-high}
\end{equation}
For every integer $X$,
\begin{align}
        X^3-X&=X(X-1)(X+1)\equiv0\pmod2,                   \label{eq:x3x-2}\\
        X^4-X^2&=X^2(X-1)(X+1)\equiv0\pmod4.               \label{eq:x4x2-2}
\end{align}
Therefore
\begin{align}
        2ab(X^3-X)&\equiv0\pmod{2^{\alpha+1}},              \label{eq:cubic-two}\\
        a^2(X^4-X^2)&\equiv0\pmod{2^{\alpha+1}}.            \label{eq:quartic-two}
\end{align}
Indeed, \eqref{eq:cubic-two} has valuation at least $\alpha+1$, and \eqref{eq:quartic-two} has valuation at least $2\alpha\ge\alpha+1$.  Thus
\begin{align}
\Phi_{a,b,q,N}(X)
&=a^2X^4+2abX^3+b^2X^2+2q(aX^2+bX)                         \notag\\
&\equiv (a^2+b^2+2qa)X^2+(2ab+2qb)X
       \pmod{2^{\alpha+1}}.                                \label{eq:p2-quad}
\end{align}
The quadratic coefficient is congruent to $b^2\equiv1\pmod2$.
\end{proof}

For the squarefree two-adic component $2\Vert N$, there is no obstruction.

\begin{lemma}[the squarefree two-adic component]                           \label{lem:two-one}
Assume $2\Vert N$, let $\pi_{a,b}$ be a normalized QPP over $\Z_N$, and let $u$ be odd.  Then $u\Phi_{a,b,q,N}$ modulo $4$ is represented by an admissible quadratic phase with odd quadratic coefficient.
\end{lemma}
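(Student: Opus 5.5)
Since $2\Vert N$, we have $\eps_N=0$ and the local modulus is $2^{\alpha+1}=4$. The plan is to reduce $\Phi_{a,b,q,N}(X)=(aX^2+bX)^2+2q(aX^2+bX)$ modulo $4$ to a quadratic with odd $X^2$-coefficient and even linear coefficient. Multiplying by the odd unit $u$ then preserves both parities, which is the local form of admissibility \eqref{eq:quad-parity} at the prime $2$.

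The key observation is that for every integer $X$ we have $X^2\equiv X\pmod 2$, so $\pi(X)=aX^2+bX\equiv (a+b)X\equiv X\pmod 2$, using $a+b\equiv1\pmod2$ from \eqref{eq:qpp-two-one}. Writing $\pi(X)=X+2m(X)$ for some integer $m(X)$, the square gives
\[
\pi(X)^2=X^2+4Xm(X)+4m(X)^2\equiv X^2\pmod 4 .
\]
The quartic-plus-cubic part $(aX^2+bX)^2$ therefore collapses to $X^2$ modulo $4$, with no valuation hypothesis on $a$.

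For the remaining term, $2q\pi(X)\equiv 2qX\pmod 4$, because $\pi(X)\equiv X\pmod2$. Hence
\[
\Phi_{a,b,q,N}(X)\equiv X^2+2qX\pmod 4 ,
\qquad
u\Phi_{a,b,q,N}(X)\equiv uX^2+2uqX\pmod 4 .
\]
The quadratic coefficient $u$ is odd and the linear coefficient $2uq$ is even, as required.

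The step needing most care is verifying that this congruence is a statement about functions on $\Z_N$, i.e.\ compatible with integer lifts. Both sides are well defined under $X\mapsto X+N$ modulo $4$: the left side by Lemma \ref{lem:rep}, and the right side by \eqref{eq:quad-period} with $B$ even. The identity holds for every integer $X$, so no difficulty arises. I would also note that this $2$-adic representative is later glued with the odd-prime representatives from Lemma \ref{lem:odd-suff} via the Chinese remainder theorem modulo $2N$.
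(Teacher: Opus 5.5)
Your proof is correct and uses the same key fact as the paper's proof: $\pi_{a,b}(k)\equiv k\pmod 2$, which follows from $a+b\equiv1\pmod 2$. The paper uses this fact to evaluate $\Phi_{a,b,q,N}$ at even and odd $k$, getting $0$ and $1+2q$ modulo $4$, then interpolates with an odd $A$ and an even $B$ and checks $2$-periodicity separately. You instead square the congruence to get the explicit representative $uX^2+2uqX$ directly, which is a little shorter and makes both parity conditions immediate.
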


\begin{proof}
Put $T(k)=\pi_{a,b}(k)$.  Since $\pi_{a,b}$ permutes $\Z_2$ and $T(0)=0$, one has
\begin{equation}
        T(k)\equiv k\pmod2.                                \label{eq:T-parity-two}
\end{equation}
Because $\eps_N=0$ when $N$ is even,
\begin{equation}
        \Phi_{a,b,q,N}(k)=T(k)\bigl(T(k)+2q\bigr).           \label{eq:Phi-two-one}
\end{equation}
If $k$ is even, then $T(k)$ is even and $T(k)(T(k)+2q)\equiv0\pmod4$.  If $k$ is odd, then $T(k)$ is odd and
\begin{equation}
        T(k)\bigl(T(k)+2q\bigr)
        \equiv 1+2q\pmod4.                                 \label{eq:odd-two-value}
\end{equation}
Thus $u\Phi_{a,b,q,N}(k)\pmod4$ depends only on $k\pmod2$.

Moreover $u\Phi_{a,b,q,N}(0)=0$, and $u\Phi_{a,b,q,N}(1)$ is odd.  Choose an odd $A$ with
\begin{equation}
        A\equiv u\Phi_{a,b,q,N}(1)\pmod2,                   \label{eq:A-mod2}
\end{equation}
and choose an even $B$ so that
\begin{equation}
        A+B\equiv u\Phi_{a,b,q,N}(1)-u\Phi_{a,b,q,N}(0)\pmod4.              \label{eq:B-mod4}
\end{equation}
The quadratic $Ak^2+Bk+u\Phi_{a,b,q,N}(0)$ has odd quadratic coefficient and even linear coefficient.  It also depends only on $k\pmod2$ modulo $4$, because
\begin{equation}
        A(k+2)^2+B(k+2)-Ak^2-Bk
        =4A(k+1)+2B\equiv0\pmod4.                          \label{eq:quad-parity-two}
\end{equation}
It matches $u\Phi_{a,b,q,N}$ at $k=0$ and $k=1$, hence it represents the local phase modulo $4$ for all integers $k$.
\end{proof}

\section{Proof of the Local Criterion}

We now assemble the necessity and sufficiency statements.

\begin{proof}[Proof of Theorem \ref{thm:local}]
Necessity is Proposition \ref{prop:nec}.  For sufficiency, assume \eqref{eq:eta-cond} for every $p^\alpha\Vert N$.

For each odd $p^\alpha\Vert N$, Lemma \ref{lem:odd-suff} gives a local quadratic representative for $u\Phi_{a,b,q,N}$ modulo $p^\alpha$ with unit quadratic coefficient.  For each $2^\alpha\Vert N$ with $\alpha\ge2$, Lemma \ref{lem:two-suff}, after multiplication by the odd integer $u$, gives a quadratic representative modulo $2^{\alpha+1}$ with odd quadratic coefficient and even linear coefficient.  If $2\Vert N$, Lemma \ref{lem:two-one} gives the corresponding representative modulo $4$.

The local representatives have the form
\begin{equation}
        u\Phi_{a,b,q,N}(k)
        \equiv A_p k^2+B_p k+C_p
        \pmod{m_p},                                      \label{eq:local-rep}
\end{equation}
where
\begin{equation}
        m_p=
        \begin{cases}
        p^\alpha, & p\text{ odd and }p^\alpha\Vert N,\\
        2^{\alpha+1}, & p=2\text{ and }2^\alpha\Vert N.
        \end{cases}                                      \label{eq:mp}
\end{equation}
Here $A_p$ is a unit modulo $p$ for every $p\mid N$.  We shall construct integers $A,B,C$ such that
\begin{equation}
        u\Phi_{a,b,q,N}(k)
        \equiv Ak^2+Bk+C\pmod{2N}
        \qquad(k\in\Z_N).                              \label{eq:global-rep}
\end{equation}

If $N$ is even, the moduli $m_p$ in \eqref{eq:mp} are pairwise coprime and their product is $2N$.  The Chinese remainder theorem gives integers $A,B,C$ satisfying
\begin{align}
        A&\equiv A_p\pmod{m_p}, &
        B&\equiv B_p\pmod{m_p}, &
        C&\equiv C_p\pmod{m_p}                         \label{eq:CRT-even}
\end{align}
for every prime $p\mid N$.  Then \eqref{eq:global-rep} holds modulo $2N$.  Moreover $A$ is a unit modulo $N$, and the two-adic representative forces $B\equiv0\pmod2=A\eps_N\pmod2$.

If $N$ is odd, the odd local moduli have product $N$.  First choose $A_0,B_0,C_0$ modulo $N$ by the Chinese remainder theorem from the odd local representatives.  Then choose integers $A,B,C$ satisfying
\begin{align}
        A&\equiv A_0\pmod N,                                             \label{eq:CRT-odd-A}\\
        B&\equiv B_0\pmod N, & B&\equiv A\pmod2,                       \label{eq:CRT-odd-B}\\
        C&\equiv C_0\pmod N, & C&\equiv0\pmod2.                        \label{eq:CRT-odd-C}
\end{align}
This is possible because $(2,N)=1$.  Since $\Phi_{a,b,q,N}(k)=T(k)(T(k)+1+2q)$, with $T(k)=ak^2+bk$, is even-valued when $N$ is odd, $u\Phi_{a,b,q,N}$ and $Ak^2+Bk+C$ agree modulo $2$; by construction they also agree modulo $N$.  Hence \eqref{eq:global-rep} holds modulo $2N$.  Again $(A,N)=1$, and \eqref{eq:CRT-odd-B} gives $B\equiv A\eps_N\pmod2$.

Thus, in both cases, $u\Phi_{a,b,q,N}$ is an admissible Zadoff--Chu quadratic phase.

By Lemma \ref{lem:quad-equiv}, the sequence $\zeta_N^{u\Phi(k)}=y_{u,q}^{a,b}(k)$ is equivalent to a Zadoff--Chu sequence.  This completes the proof.
\end{proof}

The proof also gives an effective reduction algorithm.  For each prime power $p^\alpha\Vert N$, replace the high-valuation terms by
\begin{align}
        a^2X^4&\mapsto0,
        &2abX^3&\mapsto2abX,
        &&(p=3,\ \alpha\ge2,\ \nu_3(a)=\alpha-1),      \label{eq:rule3}\\
        a^2X^4&\mapsto a^2X^2,
        &2abX^3&\mapsto2abX,
        &&(p=2,\ \nu_2(a)=\alpha-1),                 \label{eq:rule2}
\end{align}
and set $a=0$ on every $p>3$ component for which equivalence is possible.  The resulting local quadratics glue by the Chinese remainder theorem.

\section{Corrected Global Boundaries}

We next prove Corollaries \ref{cor:allqpp} and \ref{cor:irr}.  The proofs are included because they expose exactly why the prime-power condition is too restrictive.

\begin{proof}[Proof of Corollary \ref{cor:allqpp}]
Assume first that \eqref{eq:correct-boundary} holds.  Let $\pi_{a,b}\in\Q_N$.  Since $N$ is odd, Lemma \ref{lem:qpp} gives
\begin{equation}
        p\mid a\quad\text{for every }p\mid N.              \label{eq:rad-divides-a}
\end{equation}
If $p\Vert N$ is squarefree, then \eqref{eq:rad-divides-a} already gives $\nu_p(a)\ge1=\alpha$.  Thus squarefree prime factors never create a nonzero quadratic component.  Because $a\not\equiv0\pmod N$, there must be a repeated prime power $\ell^\beta\Vert N$ for which
\begin{equation}
        \nu_\ell(a)<\beta .                                \label{eq:nonzero-repeated}
\end{equation}
The hypotheses $N$ odd and $9\nmid N$ exclude $\ell=2$ and $\ell=3$ with $\beta\ge2$.  Hence $\ell>3$.  For this prime, \eqref{eq:eta} requires $\nu_\ell(a)\ge\beta$ for equivalence, contradicting \eqref{eq:nonzero-repeated}.  By Theorem \ref{thm:local}, $y_{u,q}^{a,b}$ is inequivalent to every Zadoff--Chu sequence.

Conversely, suppose \eqref{eq:correct-boundary} fails.  If $N$ is even, put
\begin{equation}
        a=N/2.                                             \label{eq:even-witness-a}
\end{equation}
Choose $b$ by the Chinese remainder theorem so that
\begin{align}
        b&\equiv0\pmod2, &&\text{if }2\Vert N,             \label{eq:even-b1}\\
        b&\equiv1\pmod{2^\alpha}, &&\text{if }2^\alpha\Vert N,\ \alpha\ge2, \label{eq:even-b2}\\
        b&\equiv1\pmod{p^\alpha}, &&\text{if }p^\alpha\Vert N,\ p\text{ odd}. \label{eq:even-b3}
\end{align}
Then \eqref{eq:qpp-odd}--\eqref{eq:qpp-two-high} show that $\pi_{a,b}$ is a normalized QPP, and $a\not\equiv0\pmod N$.  Moreover
\begin{equation}
        \nu_2(a)=
        \begin{cases}
        0, &2\Vert N,\\
        \alpha-1, &2^\alpha\Vert N,\ \alpha\ge2,
        \end{cases}                                      \label{eq:even-witness-val}
\end{equation}
and $a\equiv0$ on every odd prime-power component.  Thus \eqref{eq:eta-cond} holds everywhere, and Theorem \ref{thm:local} says the corresponding QPP-interleaved sequence is Zadoff--Chu-equivalent.  Hence the universal inequivalence property fails.

If $N$ is odd and $9\mid N$, put
\begin{equation}
        a=N/3,
        \qquad b=1.                                      \label{eq:three-witness}
\end{equation}
Then $\pi_{a,1}$ is a normalized QPP, $a\not\equiv0\pmod N$, and
\begin{equation}
        \nu_3(a)=\alpha_3-1,
        \qquad
        \nu_p(a)\ge\alpha_p\quad(p\ne3).                 \label{eq:three-witness-val}
\end{equation}
Again \eqref{eq:eta-cond} holds everywhere, so Theorem \ref{thm:local} gives an equivalent interleaved sequence.

It remains to consider odd $N$ with $9\nmid N$ and with no prime $p\ge5$ satisfying $p^2\mid N$.  Then every odd prime divisor of $N$ is squarefree, except possibly $3$ to the first power.  Lemma \ref{lem:qpp} forces $p\mid a$ for every prime $p\mid N$, hence $N\mid a$.  Therefore $\Q_N=\varnothing$, and the non-vacuous property in Corollary \ref{cor:allqpp} is false by definition.  This completes the proof.
\end{proof}

\begin{proof}[Proof of Corollary \ref{cor:irr}]
First, \eqref{eq:irr-test} can hold for some normalized QPP if and only if
\begin{equation}
        8\mid N\quad\text{or}\quad p^2\mid N\text{ for some odd prime }p.     \label{eq:irr-nonempty}
\end{equation}
Indeed, Lemma \ref{lem:qpp} forces $a\equiv0\pmod p$ on every odd local component.  Such a component can contribute to \eqref{eq:irr-test} exactly when its exponent is at least two, since then one may have $\nu_p(a)=1<\alpha$.  On the two-adic side, if $2\Vert N$ then $2a\equiv0\pmod2$, and if $4\Vert N$ then Lemma \ref{lem:qpp} forces $a$ even, so $2a\equiv0\pmod4$; neither case can make $\gcd(N,2a)<N$.  If $2^\alpha\Vert N$ with $\alpha\ge3$, one may take $\nu_2(a)=1$, giving $\nu_2(2a)=2<\alpha$.  Combining these local choices with the required QPP divisibility conditions at all other primes, and then choosing $b$ by Lemma \ref{lem:qpp} and the Chinese remainder theorem, gives the converse existence statement.

If $9\mid N$, the witness $a=N/3$, $b=1$ from \eqref{eq:three-witness} satisfies
\begin{equation}
        \gcd(N,2a)=N/3<N,                                 \label{eq:three-irr}
\end{equation}
and is Zadoff--Chu-equivalent by Theorem \ref{thm:local}.  Hence the universal irreducible-QPP inequivalence property fails.

Assume conversely that $9\nmid N$ and \eqref{eq:irr-nonempty} holds.  Let $\pi_{a,b}\in\Qi_N$.  Since $\gcd(N,2a)<N$, there is a prime power $p^\alpha\Vert N$ such that
\begin{equation}
        \nu_p(2a)<\alpha .                                \label{eq:irr-local}
\end{equation}
If $p=2$, then \eqref{eq:irr-local} gives $\nu_2(a)\le\alpha-2$, which violates \eqref{eq:eta-cond}.  If $p$ is odd, then \eqref{eq:irr-local} gives $\nu_p(a)<\alpha$.  Since $9\nmid N$, the case $p=3$ cannot have $\alpha\ge2$; for a squarefree $3$-component no irreducibility can arise because Lemma \ref{lem:qpp} forces $3\mid a$.  Thus either $p=2$ with the two-adic violation above, or $p>3$ with $\nu_p(a)<\alpha$, also a violation of \eqref{eq:eta-cond}.  Theorem \ref{thm:local} now implies that every irreducible QPP-interleaved sequence is Zadoff--Chu-inequivalent.
\end{proof}

\section{Explicit Counterexamples and Arithmetic Audit}

The smallest non-prime-power length satisfying the corrected nonzero-QPP boundary \eqref{eq:correct-boundary} is
\begin{equation}
        N=75=3\cdot5^2.                                  \label{eq:N75}
\end{equation}
For this length, the local QPP criterion gives
\begin{equation}
        a\in\{15,30,45,60\}\pmod{75},
        \qquad b\in\Z_{75}^\times.                       \label{eq:N75-qpp}
\end{equation}
For every such $a$,
\begin{equation}
        \nu_5(a)=1<2,
        \qquad 5\nmid b,
        \qquad 5\nmid 3a+b,                              \label{eq:N75-obstruction-1}
\end{equation}
and therefore
\begin{equation}
        \nu_5\bigl(\Delta^3\Phi_{a,b,q,75}(0)\bigr)
        =\nu_5\bigl(12a(3a+b)\bigr)=1<2.                 \label{eq:N75-obstruction-2}
\end{equation}
Thus no such phase can be quadratic modulo $25$, and no corresponding interleaved sequence can be equivalent to a Zadoff--Chu sequence.  Since
\begin{equation}
        |\Z_{75}^\times|=75\left(1-\frac13\right)\left(1-\frac15\right)=40, \label{eq:phi75}
\end{equation}
there are
\begin{equation}
        4\cdot40=160                                      \label{eq:N75-count}
\end{equation}
nonzero normalized QPP coefficient pairs, all Zadoff--Chu-inequivalent.

This is not an isolated accident.  The corrected condition \eqref{eq:correct-boundary} contains every length of the form
\begin{equation}
        N=s\prod_{i=1}^{r}p_i^{\alpha_i},                 \label{eq:family-general}
\end{equation}
where
\begin{align}
        &s\text{ is odd and squarefree, and }(s,p_1\cdots p_r)=1,          \label{eq:family-s}\\
        &3^2\nmid s\prod_{i=1}^{r}p_i^{\alpha_i},          \label{eq:family-no9}\\
        &p_1,\ldots,p_r\text{ are distinct primes with }p_i\ge5,
        \qquad \alpha_i\ge2
        \qquad(1\le i\le r),                              \label{eq:family-good}
\end{align}
and $r\ge1$.  Prime powers $p^\alpha$ with $p\ge5$ are only the special case $s=1$ and $r=1$.

The following table records several small lengths.  The counts are not used in the proof; they are included to make clear where the corrected criterion differs from the prime-power prediction.

\begin{table}[!t]
\centering
\caption{Small normalized nonzero-QPP lengths.  ``All inequivalent'' is the conclusion of Corollary \ref{cor:allqpp}.}
\label{tab:audit}
\begin{tabular}{c c c c}
\toprule
$N$ & factorization & $|\Q_N|$ & conclusion \\
\midrule
$25$  & $5^2$          & $80$   & all inequivalent \\
$49$  & $7^2$          & $252$  & all inequivalent \\
$75$  & $3\cdot5^2$   & $160$  & all inequivalent; not a prime power \\
$121$ & $11^2$         & $1100$ & all inequivalent \\
$125$ & $5^3$          & $2400$ & all inequivalent \\
$147$ & $3\cdot7^2$   & $504$  & all inequivalent; not a prime power \\
$175$ & $5^2\cdot7$   & $480$  & all inequivalent; not a prime power \\
\midrule
$45$  & $3^2\cdot5$   & $48$   & not all; witness $a=15,b=1$ \\
$50$  & $2\cdot5^2$   & $180$  & not all; witness $a=25$ and $b\equiv0\pmod2$ \\
$98$  & $2\cdot7^2$   & $546$  & not all; witness $a=49$ and $b\equiv0\pmod2$ \\
\bottomrule
\end{tabular}
\end{table}

For example, at $N=45$ the bad local component is $3^2$.  With
\begin{equation}
        a=15=N/3,
        \qquad b=1,                                      \label{eq:N45-witness}
\end{equation}
Theorem \ref{thm:local} predicts equivalence.  Locally modulo $9$,
\begin{align}
        (15X^2+X)^2+(1+2q)(15X^2+X)
        &\equiv X^2+30X^3+(1+2q)(6X^2+X)\pmod9              \notag\\
        &\equiv X^2+3X+(1+2q)(6X^2+X)\pmod9,              \label{eq:N45-collapse}
\end{align}
where $30X^3\equiv3X$ follows from $X^3\equiv X\pmod3$.  The quartic term vanishes modulo $9$ because $15^2\equiv0\pmod9$.  Thus the phase is locally quadratic, and the $5$-component is affine because $15\equiv0\pmod5$.

At $N=50$, the bad component is not a repeated odd prime but the squarefree two-adic component.  Taking $a=25$ and choosing $b$ with
\begin{equation}
        b\equiv0\pmod2,
        \qquad b\equiv1\pmod{25},                         \label{eq:N50-b}
\end{equation}
produces a QPP.  The $25$-component is affine because $a\equiv0\pmod{25}$, while the component modulo $2$ cannot obstruct quadraticity.  This explains why the corrected nonzero-QPP boundary requires $N$ to be odd, not merely to have a repeated prime $p\ge5$.

\section{Root Index, Shift Parameter, and Inverse QPPs}

The local criterion is insensitive to the Zadoff--Chu root index $u$.  Indeed, every local obstruction in Proposition \ref{prop:nec} is an assertion that
\begin{equation}
        \Delta^3\Phi_{a,b,q,N}(0)\not\equiv0\pmod{p^\alpha}
        \quad(p\text{ odd})                               \label{eq:root-insens-odd}
\end{equation}
or
\begin{equation}
        \Delta^3\Phi_{a,b,q,N}(0)\not\equiv0\pmod{2^{\alpha+1}}
        \quad(p=2).                                       \label{eq:root-insens-two}
\end{equation}
Multiplication by $u\in\Z_N^\times$ preserves each nonvanishing statement.  Therefore the criterion holds simultaneously for all roots.

The parameter $q$ is similarly harmless, but for a different reason.  It changes the lifted phase by
\begin{equation}
        2q(ak^2+bk),                                      \label{eq:q-change}
\end{equation}
which is already a quadratic correction at the level of finite differences.  More explicitly,
\begin{equation}
        \Delta^3\bigl(2q(ak^2+bk)\bigr)=0.                \label{eq:q-d3-zero}
\end{equation}
Thus the same theorem applies to every cyclic shift parameter $q$ in the Zadoff--Chu family.

One may also consider interleaving by a permutation whose inverse is a QPP.  Let $\rho$ be any permutation of $\Z_N$ such that
\begin{equation}
        \rho^{-1}(X)=aX^2+bX\pmod N.                      \label{eq:inverse-qpp}
\end{equation}
The CAZAC property of $z_{u,q,N}(\rho(k))$ may follow from the QPP structure of $\rho^{-1}$, as in \cite{BerggrenPopovic2024}.  However, Zadoff--Chu equivalence of $z_{u,q,N}(\rho(k))$ is not governed directly by the coefficient $a$ in \eqref{eq:inverse-qpp}; it is governed by the lifted phase of $\rho(k)$ itself.  If $\rho$ has degree larger than two as a polynomial representative, then the finite-difference invariant must be applied to that representative, not to its inverse.  This is why Theorem \ref{thm:local} is stated for QPP interleavers and not for inverse-QPP interleavers.  The method, however, extends: if a polynomial interleaver $P(k)$ is used, then Zadoff--Chu equivalence forces
\begin{equation}
        \Delta^3\bigl((P(k)+\eps_N+2q)P(k)\bigr)\equiv0     \label{eq:general-method}
\end{equation}
in every local phase modulus.  If $P$ has degree $d\le1$, this third difference is identically zero.  If $d\ge2$, its formal degree over the integers is at most $2d-3$, and it may drop further after reduction as a polynomial function modulo prime powers.  Thus local valuation collapses become a finite arithmetic problem in polynomial functions over prime-power residue rings.

\section{Why the Prime-Power Pattern Appeared}

The data in \cite{BerggrenPopovic2024} correctly identified the prime powers
\begin{equation}
        25,
        \quad 49,
        \quad 121,
        \quad 125,
        \quad 169,
        \quad 343                                      \label{eq:bp-prime-powers}
\end{equation}
as lengths for which all tested normalized QPP interleavings were inequivalent to ordinary Zadoff--Chu sequences.  These lengths all satisfy \eqref{eq:correct-boundary}.  The missed phenomenon is that squarefree odd factors other than an additional power of $3$ do not alter the obstruction.  Algebraically, this is the statement
\begin{equation}
        p\Vert N,
        \quad p\text{ odd}
        \quad\Longrightarrow\quad
        p\mid a\quad\Longrightarrow\quad a\equiv0\pmod p,  \label{eq:squarefree-noeffect}
\end{equation}
so a squarefree odd component contributes only an affine local phase.  The obstruction must come from a repeated component, and if one repeated component $p^\alpha$ with $p>3$ remains nonzero, the third difference detects it through
\begin{equation}
        \Delta^3\Phi(0)=12a(3a+b),                         \label{eq:detect-repeat}
\end{equation}
with
\begin{equation}
        \nu_p(12)=0,
        \qquad
        \nu_p(3a+b)=0.                                    \label{eq:detect-repeat-val}
\end{equation}
This is independent of whether the rest of $N$ is $1$, $3$, $7$, $11$, or any odd squarefree factor prime to $p$.

The exceptional repeated prime $3$ is different because
\begin{equation}
        \nu_3(12)=1.                                      \label{eq:nu3-12}
\end{equation}
At the penultimate valuation $\nu_3(a)=\alpha-1$, the third difference vanishes modulo $3^\alpha$, and the identity
\begin{equation}
        X^3\equiv X\pmod3                                  \label{eq:fermat3}
\end{equation}
turns the cubic term into a linear term.  Similarly, the exceptional two-adic behavior is caused by
\begin{equation}
        \nu_2(12)=2,
        \qquad
        X^3\equiv X\pmod2,
        \qquad
        X^4\equiv X^2\pmod4.                              \label{eq:two-identities}
\end{equation}
These congruences are precisely the exceptional cases encoded in \eqref{eq:eta}.

\section{Divisor Calculus and a Non-Enumerative Test}

The local criterion can be rewritten as a single divisibility condition on the quadratic coefficient.  This form is useful because it separates the existence of nonzero QPPs from their possible collapse to ordinary Zadoff--Chu sequences.
For $p^\alpha\Vert N$, define the QPP divisibility exponent
\begin{equation}
        \mu(p,\alpha)=
        \begin{cases}
        0, & p=2,\ \alpha=1,\\
        1, & p=2,\ \alpha\ge2,\\
        1, & p\text{ odd},
        \end{cases}                                      \label{eq:mu}
\end{equation}
and let $\eta(p,\alpha)$ be the exponent in \eqref{eq:eta}.  Put
\begin{align}
        M_N&=\prod_{p^\alpha\Vert N}p^{\mu(p,\alpha)},     \label{eq:MN}\\
        E_N&=\prod_{p^\alpha\Vert N}p^{\eta(p,\alpha)}.    \label{eq:EN}
\end{align}
Then Lemma \ref{lem:qpp} says that a normalized QPP coefficient $a$ must satisfy
\begin{equation}
        M_N\mid a,                                      \label{eq:MN-divides-a}
\end{equation}
with the additional local unit or parity condition on $b$, while Theorem \ref{thm:local} says that Zadoff--Chu equivalence is exactly
\begin{equation}
        E_N\mid a.                                      \label{eq:EN-divides-a}
\end{equation}
The divisor $E_N$ has the particularly simple closed form
\begin{equation}
        E_N=\frac{N}{2^{\mathbf 1_{2\mid N}}3^{\mathbf 1_{3\mid N}}},        \label{eq:EN-closed}
\end{equation}
where a missing prime contributes exponent $0$.  Indeed, the full exponent $\alpha$ is retained for every $p>3$, while one exponent is removed from a present $2$- or $3$-part.

Let
\begin{equation}
        L_N=\operatorname{lcm}(M_N,E_N).                 \label{eq:LN}
\end{equation}
There exists a nonzero normalized QPP coefficient $a$ whose interleaving is Zadoff--Chu-equivalent if and only if
\begin{equation}
        L_N<N.                                           \label{eq:lcm-test}
\end{equation}
The implication is immediate: $M_N\mid a$ and $E_N\mid a$ if and only if $L_N\mid a$, and a nonzero residue class modulo $N$ divisible by $L_N$ exists precisely when $L_N<N$.  Conversely, if $L_N<N$, take $a=L_N$ and choose $b$ locally by
\begin{equation}
        b\equiv
        \begin{cases}
        0\pmod2, & 2\Vert N\text{ and }a\equiv1\pmod2,\\
        1\pmod2, & 2\Vert N\text{ and }a\equiv0\pmod2,\\
        1\pmod{2^\alpha}, & 2^\alpha\Vert N,\ \alpha\ge2,\\
        1\pmod{p^\alpha}, & p^\alpha\Vert N,\ p\text{ odd},
        \end{cases}                                      \label{eq:lcm-b-choice}
\end{equation}
then combine these congruences by the Chinese remainder theorem.  The pair $(a,b)$ is a normalized QPP and satisfies \eqref{eq:EN-divides-a}.

The exponent of $p$ in $L_N$ is
\begin{equation}
        \lambda(p,\alpha)=\max\{\mu(p,\alpha),\eta(p,\alpha)\}.              \label{eq:lambda}
\end{equation}
Explicitly,
\begin{equation}
\lambda(p,\alpha)=
\begin{cases}
0,        & p=2,\ \alpha=1,\\
\alpha-1, & p=2,\ \alpha\ge2,\\
1,        & p=3,\ \alpha=1,\\
\alpha-1, & p=3,\ \alpha\ge2,\\
\alpha,   & p>3.
\end{cases}                                               \label{eq:lambda-table}
\end{equation}
Therefore
\begin{equation}
        L_N=N                                             \label{eq:LN-equals-N}
\end{equation}
if and only if $N$ is odd and $9\nmid N$.  Under this condition, the nonzero normalized QPP coefficient set is nonempty if and only if $M_N<N$, namely if and only if at least one prime power $p^\alpha\Vert N$ with $p\ge5$ has $\alpha\ge2$.  Thus the lcm test alone recovers the corrected non-vacuous boundary
\begin{equation}
        N\text{ odd},\qquad 9\nmid N,\qquad \exists p\ge5:\ p^2\mid N.        \label{eq:lcm-correct-boundary}
\end{equation}
No search over roots, shifts, LFM parameters, or decimations is needed.

For odd $N$ the coefficient count also has a closed form.  Since
\begin{equation}
        M_N=\prod_{p\mid N}p=\operatorname{rad}(N),        \label{eq:MN-rad-odd}
\end{equation}
one has
\begin{equation}
        a=\operatorname{rad}(N)c\pmod N,
        \qquad 0\le c<\frac{N}{\operatorname{rad}(N)},     \label{eq:a-rad-c}
\end{equation}
and the nonzero condition removes only $c=0$.  Since $b\in\Z_N^\times$, the number of normalized nonzero QPP coefficient pairs is
\begin{equation}
        |\Q_N|=\varphi(N)\left(\frac{N}{\operatorname{rad}(N)}-1\right)
        \qquad(N\text{ odd}).                             \label{eq:odd-count}
\end{equation}
For instance,
\begin{align}
        |\Q_{75}|&=\varphi(75)(75/15-1)=40\cdot4=160,      \label{eq:count-75}\\
        |\Q_{147}|&=\varphi(147)(147/21-1)=84\cdot6=504,   \label{eq:count-147}\\
        |\Q_{175}|&=\varphi(175)(175/35-1)=120\cdot4=480.  \label{eq:count-175}
\end{align}
These are all non-prime-power lengths covered by \eqref{eq:lcm-correct-boundary}.

\section{Sharpness of the Local Thresholds}

Within the QPP class, the thresholds in \eqref{eq:eta} are sharp in the following local sense.  Write $t=\nu_p(a)$ and use the QPP condition that the appropriate linear coefficient is a local unit.
For $p>3$, Lemma \ref{lem:qpp} gives $p\nmid b$.  If $t<\alpha$, then
\begin{equation}
        3a+b\equiv b\not\equiv0\pmod p,                  \label{eq:sharp-p-unit}
\end{equation}
and hence
\begin{equation}
        \nu_p\bigl(\Delta^3\Phi_{a,b,q,N}(0)\bigr)
        =\nu_p(12)+\nu_p(a)+\nu_p(3a+b)
        =t<\alpha.                                        \label{eq:sharp-p-val}
\end{equation}
Thus any weakening of the local vanishing condition $a\equiv0\pmod{p^\alpha}$ is visible already at $k=0$.  Conversely, if $\nu_p(a)\ge\alpha$, then $ak^2+bk\equiv bk\pmod{p^\alpha}$, and the local phase is quadratic.  Since $a\equiv0\pmod{p^\alpha}$ is already the zero condition in $\Z_{p^\alpha}$, there is no stronger residue-class condition to impose at this local component.

For $p=3$ and $\alpha\ge2$, the same computation gives
\begin{equation}
        \nu_3\bigl(\Delta^3\Phi_{a,b,q,N}(0)\bigr)=t+1     \label{eq:sharp-3-val}
\end{equation}
whenever $t<\alpha$ and $3\nmid b$.  Therefore $t\le\alpha-2$ is impossible for a quadratic phase.  At the boundary $t=\alpha-1$, the obstruction disappears, and the phase collapses because
\begin{align}
        a^2X^4&\equiv0\pmod{3^\alpha},                    \label{eq:sharp-3-quartic}\\
        2abX^3&\equiv2abX\pmod{3^\alpha}.                 \label{eq:sharp-3-cubic}
\end{align}
The second congruence is exactly $X^3-X\equiv0\pmod3$ multiplied by a coefficient of valuation $\alpha-1$.  Hence lowering the threshold admits non-quadratic phases, while raising it to $\alpha$ would exclude genuine equivalent boundary cases.  For $p=3$ and $\alpha=1$, Lemma \ref{lem:qpp} already forces $a\equiv0\pmod3$, so the theorem imposes no additional local obstruction.

For $p=2$ and $\alpha\ge2$, QPPs have $b$ odd and $a$ even.  If $t\le\alpha-2$, then $3a+b$ is odd and
\begin{equation}
        \nu_2\bigl(\Delta^3\Phi_{a,b,q,N}(0)\bigr)=t+2<\alpha+1,              \label{eq:sharp-2-val}
\end{equation}
so the phase is not quadratic modulo $2^{\alpha+1}$.  At $t=\alpha-1$ the identities
\begin{align}
        2ab(X^3-X)&\equiv0\pmod{2^{\alpha+1}},             \label{eq:sharp-2-cubic}\\
        a^2(X^4-X^2)&\equiv0\pmod{2^{\alpha+1}}             \label{eq:sharp-2-quartic}
\end{align}
show that the cubic and quartic parts reduce to linear and quadratic terms.  Thus the two-adic threshold is exactly $\alpha-1$.  When $2\Vert N$, Lemma \ref{lem:two-one} shows that the two-adic component has no obstruction; imposing a positive two-adic valuation would be too strong.

These computations also explain why a finite search up to a modest bound can be misleading.  A repeated $p>3$ component is detected at $k=0$ by
\begin{equation}
        \Delta^3\Phi(0)=a\cdot 12(3a+b),                  \label{eq:detected-factor}
\end{equation}
where $12(3a+b)$ has $p$-adic valuation zero.  Repeated $2$- and $3$-components lose one local exponent because $12$ is divisible by $2^2\cdot3$ and because $X^3-X$ has universal divisibility by $2$ and $3$.  The global classification is therefore controlled by the simple arithmetic deletion
\begin{equation}
        N\longmapsto E_N=\frac{N}{2^{\mathbf 1_{2\mid N}}3^{\mathbf 1_{3\mid N}}}.          \label{eq:deletion-map}
\end{equation}

\section{Consequences for Sequence Design}

The corrected boundary changes the available design space.  The prime-power condition \eqref{eq:bp-boundary} gives lengths
\begin{equation}
        N=p^\alpha,
        \qquad p\ge5,
        \qquad \alpha\ge2.                               \label{eq:design-primepower}
\end{equation}
Corollary \ref{cor:allqpp} permits the larger class
\begin{equation}
        N=m p^\alpha,
        \qquad p\ge5,
        \qquad \alpha\ge2,
        \qquad m\text{ odd squarefree},
        \qquad (m,p)=1,
        \qquad 9\nmid m.                                  \label{eq:design-larger}
\end{equation}
More generally, it permits any odd $N$ with no repeated factor $3^2$ and with at least one repeated prime $p\ge5$.

For such lengths, every nonzero normalized QPP coefficient $a$ has a nonvanishing component at some repeated $p>3$.  Consequently, every sequence in the QPP-interleaved family considered in \cite{BerggrenPopovic2024} that is generated by a nonzero normalized QPP lies outside the ordinary Zadoff--Chu equivalence class.  When the external CAZAC result for that construction is invoked, this gives QPP-interleaved CAZAC sequences outside the ordinary Zadoff--Chu class.  This statement does not assert pairwise inequivalence among different QPP interleavers; it asserts inequivalence to all ordinary Zadoff--Chu sequences under \eqref{eq:op-rot}--\eqref{eq:op-conj}.  Pairwise equivalence among different interleavers is a separate problem, because two distinct quadratic phases may have the same square after reduction modulo $2N$ or may be related by central symmetries of the base Zadoff--Chu sequence.

The finite-difference method gives a quick screening rule.  For a candidate QPP $\pi_{a,b}$, compute
\begin{equation}
        D_{a,b}=12a(3a+b).                                \label{eq:screen}
\end{equation}
If there exists $p^\alpha\Vert N$ with
\begin{equation}
        \begin{cases}
        p>3,       & \nu_p(D_{a,b})<\alpha,\\
        p=3,       & \nu_3(D_{a,b})<\alpha,\\
        p=2,       & \nu_2(D_{a,b})<\alpha+1,
        \end{cases}                                      \label{eq:screen-local}
\end{equation}
then the interleaved phase cannot be Zadoff--Chu-equivalent.  For QPPs, Lemma \ref{lem:qpp} simplifies \eqref{eq:screen-local} to the threshold \eqref{eq:eta}; for higher-degree interleavers, \eqref{eq:general-method} provides the analogous screening invariant.

\section{Conclusion}

The equivalence problem for normalized QPP-interleaved Zadoff--Chu sequences is local.  The exact local invariant is the third finite difference of the lifted phase, and it yields the valuation criterion \eqref{eq:eta}.  The prime-power lengths $p^n$ with $p>3$ and $n>1$ are only a subfamily of the true nonzero-QPP inequivalence lengths.  The corrected condition is
\[
        N\text{ odd},
        \qquad 9\nmid N,
        \qquad p^2\mid N\text{ for some }p\ge5.
\]
Thus $N=75$ is already a complete counterexample to the conjectured prime-power-only boundary, and Theorem \ref{thm:local} gives the full replacement criterion.
\section*{Declaration of Generative AI and AI-Assisted Technologies in the Writing Process}
During the preparation of this work, the authors used DeepSeek to build a specialized agent for solving mathematical problems, which was employed to generate an initial proof of the main theorem. After using this tool, the authors reviewed and edited the content as needed and take full responsibility for the content of the published article.

\end{document}